\documentclass[12pt,reqno]{amsart}

\usepackage[english]{babel}
\usepackage[utf8]{inputenc}

\usepackage{faktor}
\usepackage{enumitem}
\usepackage{graphicx}
\usepackage{subfig}
\usepackage{tikz-cd}
\usetikzlibrary{arrows.meta}

\usepackage[T1]{fontenc} 
\usepackage{amsmath,amsthm,amsfonts,amssymb,dsfont,mathtools} 
\usepackage{epsfig,subfig}	
\usepackage{multirow,multicol}
\usepackage{tabularx} 
\usepackage{setspace} 
\usepackage{verbatim} 

\usepackage{color}
\usepackage[dvipsnames]{xcolor}

\usepackage[pdftex,%
bookmarks=true,
breaklinks=true,
bookmarksnumbered = true,
colorlinks= true,
urlcolor= green,
anchorcolor = yellow,
citecolor=blue,
]{hyperref}                   

\newtheoremstyle{theoremm}{}{}{\itshape}{}{\scshape}{.}{ }{}
\theoremstyle{theoremm}
\newtheorem{teo}{Theorem}[section]
\newtheorem{lema}[teo]{Lemma}
\newtheorem{prop}[teo]{Proposition}
\newtheorem{coro}[teo]{Corollary}

\newtheoremstyle{remark}{}{}{}{}{\scshape}{.}{ }{}
\theoremstyle{remark}
\newtheorem{defi}[teo]{Definition}
\newtheorem{obs}[teo]{Remark}

\newcommand{\aut}[1]{\ensuremath{\operatorname{\text{Aut}}\left({#1}\right)}}
\renewcommand{\ker}[1]{\ensuremath{\operatorname{\text{Ker}}\left({#1}\right)}}

\renewcommand{\to}{\ensuremath{\longrightarrow}}
\renewcommand{\mapsto}{\ensuremath{\longmapsto}}

\newcommand{\ang}[1]{\ensuremath{\left\langle #1\right\rangle}}
\newcommand{\set}[2]{\ensuremath{\left\{#1 \,\mid\, #2\right\}}}

\makeatletter
\def\@map#1#2[#3]{\mbox{$#1 \colon\thinspace #2 \to #3$}}
\def\map#1#2{\@ifnextchar [{\@map{#1}{#2}}{\@map{#1}{#2}[#2]}}
\makeatother

\numberwithin{equation}{section}
\allowdisplaybreaks

\usepackage{fancyhdr}
\begin{document}

\title{Permutational wreath pullbacks and framed braid-type groups}

\author[Enio Leite]{{\^E}nio Leite}
\address{Universidade Federal da Bahia, Departamento de Matem\'atica - IME, CEP:~40170-110 - Salvador, Brazil}
\email{leiteenio@gmail.com}

\author[Oscar Ocampo]{Oscar Ocampo}
\address{Universidade Federal da Bahia, Departamento de Matem\'atica - IME, CEP:~40170-110 - Salvador, Brazil}
\email{oscaro@ufba.br}

\subjclass[2020]{Primary 20E22; Secondary 20F36, 20E36, 57K20}

\keywords{Permutational wreath products, semidirect products, structural rigidity, group extensions, braid groups}

\date{\today}

\begin{abstract}

Let $\sigma\colon G \to S_n$ be a surjective homomorphism and let $H$ be a group. 
We introduce the \emph{permutational wreath pullback}
\[
H \wr_\sigma G = H^n \rtimes_\sigma G,
\]
where the action of $G$ on $H^n$ is induced by permutation of coordinates via $\sigma$, and undertake a systematic structural study of this construction. 
We determine the center and the abelianization in full generality. 
We further show that $H \wr_\sigma G$ admits a natural interpretation as the pullback of the classical wreath product $H \wr S_n$ along $\sigma$, providing a conceptual explanation for its functorial behavior. 
When $H$ is finitely generated abelian, we establish a criterion for the abelian kernel $H^n$ to be characteristic and for $H \wr_\sigma G$ to inherit the $R_\infty$-property from $G$; we verify this criterion for kernels arising from the virtual braid group $VB_n$ and the virtual twin group $VT_n$, obtaining new families of framed groups with the $R_\infty$-property. 
Rigidity results show that the abelian kernel, $n$, $H$, and $G$ are determined by the abstract group $H \wr_\sigma G$. 
Applications include uniform descriptions of classical, surface, virtual, and singular framed braid groups, and a reduction of splitting problems for framed surface braid groups to the classical Fadell--Neuwirth setting. 
\end{abstract}

\maketitle

\section{Introduction}\label{sec:intro}

Wreath products and semidirect products defined by permutation actions arise naturally in several areas of group theory and topology, particularly in geometric group theory. 
A classical example is the standard wreath product
\[
H \wr S_n = H^n \rtimes S_n,
\]
where the symmetric group $S_n$ acts on $H^n$ by permuting coordinates. Such constructions play a fundamental role in the study of groups acting on configurations, and appear naturally in contexts involving braid groups, mapping class groups, and configuration spaces.

In braid theory, this phenomenon appears prominently in the study of \emph{framed braid groups}. For the classical braid group $B_n$, the canonical surjective homomorphism
\[
\sigma\colon B_n \to S_n
\]
induces an action of $B_n$ on $\mathbb{Z}^n$ by permutation of coordinates, leading to the framed braid group
\[
\mathbb{Z}^n \rtimes B_n,
\]
introduced by Ko and Smolinsky~\cite{KS} and further studied in connection with $3$-manifolds and quantum invariants. 
This construction has been extended in several directions. Juyumaya and Lambropoulou~\cite{JL1,JL2} introduced and studied $p$-adic framed braid groups and their connections with knot invariants and Hecke-type algebras. Bellingeri and Gervais~\cite{BG} developed geometric models of framed braid groups on surfaces, showing that the structure of the framed group depends on the topology of the underlying surface. 

Framed versions of virtual, singular, and other braid-type groups were later developed in the doctoral thesis of the first named author~\cite{L}, where new families were introduced and a systematic study of their structural properties was carried out, encompassing both classical and surface braid groups. The present paper shows that all these constructions arise naturally from a single algebraic mechanism, namely permutational wreath pullbacks, thereby providing a unified and conceptual framework for framed braid-type groups.

Despite these developments, existing approaches are largely \emph{case-by-case} and often rely on geometric or presentation-based constructions. In particular, a unified structural framework encompassing these examples from a purely algebraic viewpoint has been missing. 
The purpose of this paper is to introduce such a framework and to show that it leads to a systematic and uniform treatment of framed braid-type groups, replacing case-by-case arguments by conceptual structural results.

Let $G$ be a group equipped with a surjective homomorphism
\[
\sigma\colon G \to S_n,
\]
and let $H$ be an arbitrary group. The action of $S_n$ on $H^n$ by permutation of coordinates induces, via $\sigma$, an action of $G$ on $H^n$. We define the associated \emph{permutational wreath pullback} as
\[
H \wr_\sigma G := H^n \rtimes_\sigma G.
\]
This construction simultaneously generalizes and unifies:
\begin{itemize}
\item classical framed braid groups ($H=\mathbb{Z}$, $G=B_n$),
\item framed braid groups on surfaces,
\item framed braid-type groups such as virtual and singular braid groups,
\item and, more generally, constructions with arbitrary coefficient groups $H$, including $p$-adic and related variants as in~\cite{JL1,JL2}.
\end{itemize}

The term “permutational wreath pullback” reflects the fact that this construction admits a canonical interpretation as a base change of the classical wreath product along the homomorphism $\sigma$. 
More precisely, in Theorem~\ref{teo:pullback} we establish the following fundamental description: there is a natural isomorphism
\[
H \wr_\sigma G \cong G \times_{S_n} (H \wr S_n),
\]
showing that $H \wr_\sigma G$ arises as the pullback of the classical wreath product along $\sigma$. 
This viewpoint explains both the naturality and the functoriality of the construction, and places framed braid-type groups within a unified categorical framework.

\noindent
\textbf{Main results.}
\begin{enumerate}
\item[\textbf{(A)}] \textbf{Structure.} 
We determine the center and the abelianization of $H \wr_\sigma G$ in full generality (Theorems~\ref{teo:center} and~\ref{teo:abelianization}):
\[
Z(H \wr_\sigma G) = \Delta(Z(H)) \times \bigl(Z(G)\cap \ker{\sigma}\bigr),
\qquad
(H \wr_\sigma G)^{ab} \cong H^{ab} \times G^{ab}.
\]
These results show that the structure of $H \wr_\sigma G$ is governed by a precise interaction between the internal algebraic properties of $H$ and the permutation action induced by $\sigma$.

\item[\textbf{(B)}] \textbf{Rigidity.} 
Under a natural structural condition $(\ast)$ (every abelian normal subgroup is contained in $H^n$), the subgroup $H^n$ is intrinsically determined as the largest abelian normal subgroup, and hence is characteristic in $H \wr_\sigma G$ (Theorem~\ref{teo:Hn_characteristic}). 
Moreover, the decomposition $H \wr_\sigma G = H^n \rtimes_\sigma G$ is rigid: both the abelian kernel and the quotient group are determined by the group structure (Theorem~\ref{teo:intrinsic_rigidity}). 
These results establish a strong form of structural rigidity.

\item[\textbf{(C)}] \textbf{Subgroups associated to the permutation kernel.}
Let $\pi\colon H \wr_\sigma G \to G$ be the canonical projection. 
In Proposition~\ref{prop:pure_splits} we show that the subgroup lying over $\ker{\sigma}$ splits as a direct product:
\[
\pi^{-1}(\ker{\sigma}) \cong H^n \times \ker{\sigma}.
\]
This subgroup is characteristic whenever $\ker{\sigma}$ is characteristic in $G$ (Theorem~\ref{teo:pure_characteristic}). 
We also derive consequences for twisted conjugacy and the $R_\infty$-property: under condition $(\ast)$, if $G$ has the $R_\infty$-property, then so does $H \wr_\sigma G$ (Theorem~\ref{teo:R_infty_general}).

\item[\textbf{(D)}] \textbf{Applications to braid-type groups.} 
We obtain explicit structural descriptions of classical, surface, virtual and singular framed braid groups, including their centers and abelianizations. 
We verify that the subgroups $\ker\sigma$ for the relevant epimorphisms $(\pi_K\colon VB_n\to S_n$, $\sigma\colon VT_n\to S_n$, $\theta\colon VT_n\to S_n)$ contain no non-trivial abelian normal subgroups of the corresponding ambient groups. 
Consequently, the associated framed groups satisfy condition $(\ast)$ and therefore inherit the $R_\infty$-property (Corollary~\ref{coro:R_infty_examples}).  
\end{enumerate}

Taken together, these results show that the permutational wreath pullback provides a unified algebraic framework that both generalizes and clarifies the structure of framed braid-type groups in a conceptual way. 
The paper is organized as follows. 
In Section~\ref{sec:wreath}, we introduce the construction and establish its basic properties. Section~\ref{sec:structure} contains the structural and rigidity results. In Section~\ref{sec:pullback}, we develop the pullback interpretation and functoriality. 
Finally, Section~\ref{sec:braid} applies the general theory to braid-type groups. 
We obtain explicit structural descriptions of classical and surface framed braid groups, 
and introduce framed analogues of virtual and singular braid groups. 
We also verify condition $(\ast\ast)$ (defined in Theorem~\ref{teo:conditionA_equiv}) for the kernels $KB_n$, $PVT_n$ and $KT_n$ arising from the virtual braid group $VB_n$ and the virtual twin group $VT_n$, leading to new families of framed groups with the $R_\infty$-property. Moreover, we show that splitting problems for framed surface braid groups reduce to the classical Fadell-Neuwirth setting, so that no new obstructions arise in the framed context. This provides a unified and conceptual framework encompassing several previously unrelated constructions.

\subsection*{Acknowledgments}

The second named author gratefully acknowledges the support of Eliane Santos, the staff of HCA, Bruno Noronha, Luciano Macedo, M\'arcio Isabella, Andreia de Oliveira Rocha, Andreia Gracielle Santana, Ednice de Souza Santos, and SMURB--UFBA (Servi\c{c}o M\'edico Universit\'ario Rubens Brasil Soares), whose support since July 2024 was essential in enabling the completion of this work. 
E.~L.~was partially supported by Funda\c c\~ao de Amparo \`a Pesquisa do Estado da Bahia (FAPESB). 
O.~O.~was partially supported by the National Council for Scientific and Technological Development (CNPq, Brazil) through a \textit{Bolsa de Produtividade} grant No.~305422/2022--7.


\section{Permutational wreath pullbacks}\label{sec:wreath}

Let $n\ge 2$ be a fixed integer. Throughout the paper, $S_n$ denotes the symmetric group acting on the set $\{1,\dots,n\}$ by permutation. 
All groups considered in this paper are assumed to be non-trivial unless explicitly stated otherwise.

\begin{defi}
Let $G$ and $H$ be groups and let $\sigma\colon G \to S_n$ be a surjective homomorphism. 
The action of $S_n$ on $H^n$ by permutation of coordinates induces, via $\sigma$, an action of $G$ on $H^n$ defined by
\[
g \cdot (h_1,\dots,h_n)
=
\bigl(h_{\sigma(g)(1)},\dots,h_{\sigma(g)(n)}\bigr),
\qquad g\in G.
\]
We define the \emph{permutational wreath pullback} of $H$ by $G$ (with respect to $\sigma$) to be the semidirect product
\[
H \wr_\sigma G := H^n \rtimes_\sigma G.
\]
\end{defi}

Informally, the group $H \wr_\sigma G$ may be viewed as the result of letting $G$ act on $n$ copies of $H$ by permuting the coordinates according to the permutation representation induced by $\sigma$. 
From this perspective, the construction interpolates between semidirect products and wreath products: it behaves like a wreath product, but with the symmetric group replaced by an arbitrary group acting via its permutation representation.

Equivalently, $H \wr_\sigma G$ is obtained by pulling back the canonical permutational extension
\[
1 \to H^n \to H \wr S_n \to S_n \to 1
\]
along $\sigma\colon G \to S_n$. In this sense, it provides a canonical way to transfer the permutational structure of $S_n$ to any group equipped with a permutation representation.

Elements of $H \wr_\sigma G$ will be written as pairs $(\mathbf{h},g)$, where $\mathbf{h}=(h_1,\dots,h_n)\in H^n$ and $g\in G$. 
The multiplication is given by
\begin{equation}\label{eq:multiplication}
(\mathbf{h},g)(\mathbf{k},x)
=
\bigl(\mathbf{h}\cdot g\mathbf{k},\, gx\bigr),
\end{equation}
where $g\mathbf{k}$ denotes the permutational action of $g$ on $\mathbf{k}$.

\noindent
The group $H \wr_\sigma G$ fits into a natural short exact sequence
\begin{equation}\label{eq:extension}
1 \longrightarrow H^n 
\longrightarrow H \wr_\sigma G 
\longrightarrow G 
\longrightarrow 1,
\end{equation}
where the projection is given by $(\mathbf{h},g)\mapsto g$. 
The subgroup $H^n$ is normal in $H \wr_\sigma G$, and the subgroup
\[
\{(1,g)\mid g\in G\}
\]
is naturally isomorphic to $G$. Thus $H \wr_\sigma G$ contains canonical copies of both $H^n$ and $G$.

\begin{prop}\label{prop:dependence_sigma}
Let $(G_1,\sigma_1)$ and $(G_2,\sigma_2)$ be groups equipped with surjective homomorphisms
to $S_n$. If there exists an isomorphism $f\colon G_1 \to G_2$ such that
\[
\sigma_2 \circ f = \sigma_1,
\]
then there is a natural isomorphism
\[
H \wr_{\sigma_1} G_1 \cong H \wr_{\sigma_2} G_2.
\]
\end{prop}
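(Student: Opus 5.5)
The natural candidate is the map
\[
\Phi\colon H \wr_{\sigma_1} G_1 \to H \wr_{\sigma_2} G_2,\qquad \Phi(\mathbf{h},g)=(\mathbf{h},f(g)),
\]
which is the identity on the $H^n$-coordinate and applies $f$ on the $G$-coordinate. I will check that $\Phi$ is a homomorphism and that it is bijective. Naturality should then amount to the observation that $\Phi$ is built only from $f$ and the identity of $H^n$, and is compatible with the extensions \eqref{eq:extension}.

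The key point is that the two actions on $H^n$ match under $f$. By definition, $g\cdot\mathbf{k}$ in $H\wr_{\sigma_1}G_1$ is the permutation of coordinates by $\sigma_1(g)$. Likewise, $f(g)\cdot\mathbf{k}$ in $H\wr_{\sigma_2}G_2$ is the permutation by $\sigma_2(f(g))$. The hypothesis $\sigma_2\circ f=\sigma_1$ gives
\[
f(g)\cdot\mathbf{k}=g\cdot\mathbf{k}
\]
for all $g\in G_1$ and $\mathbf{k}\in H^n$. Using the multiplication rule \eqref{eq:multiplication}, we get
\[
\Phi\bigl((\mathbf{h},g)(\mathbf{k},x)\bigr)=(\mathbf{h}\cdot g\mathbf{k},\,f(gx))=(\mathbf{h}\cdot f(g)\mathbf{k},\,f(g)f(x))=\Phi(\mathbf{h},g)\,\Phi(\mathbf{k},x).
\]
Hence $\Phi$ is a homomorphism.

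For bijectivity, I apply the same construction to $f^{-1}$. It satisfies $\sigma_1\circ f^{-1}=\sigma_2$, so it yields a homomorphism $\Psi(\mathbf{h},y)=(\mathbf{h},f^{-1}(y))$. The composites $\Psi\circ\Phi$ and $\Phi\circ\Psi$ are visibly the identities, so $\Phi$ is an isomorphism.

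For naturality, I will note that $\Phi$ restricts to the identity on $H^n$ and covers $f$ on the quotients, giving a commutative map of the extensions \eqref{eq:extension}. The construction is also compatible with composition, since $(\mathbf{h},g)\mapsto(\mathbf{h},f'(f(g)))$ is the composite of the two maps. There is no real obstacle here. The only step requiring care is the action compatibility, which is exactly where the hypothesis $\sigma_2\circ f=\sigma_1$ is used.
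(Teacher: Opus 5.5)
Your proposal is correct and follows the paper's proof: it uses the same map $\Phi(\mathbf{h},g)=(\mathbf{h},f(g))$. The homomorphism property comes from $\sigma_2\circ f=\sigma_1$, which makes the actions of $g$ and $f(g)$ on $H^n$ coincide, and the inverse is $(\mathbf{h},y)\mapsto(\mathbf{h},f^{-1}(y))$, exactly as in the paper. Your extra remarks on naturality (the commutative map of extensions and compatibility with composition) spell out what the paper leaves implicit in the word ``natural''.
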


\begin{proof}
Define
\[
\Phi\colon H^n \rtimes_{\sigma_1} G_1 \longrightarrow H^n \rtimes_{\sigma_2} G_2,
\qquad
(\mathbf{h},g) \longmapsto (\mathbf{h},f(g)).
\]
Since $\sigma_2(f(g))=\sigma_1(g)$, the actions of $g$ and $f(g)$ on $H^n$ coincide,
and hence $\Phi$ preserves multiplication. It is clearly bijective, with inverse
$(\mathbf{h},x)\mapsto (\mathbf{h},f^{-1}(x))$.
\end{proof}

In other words, the isomorphism type of $H \wr_\sigma G$ depends only on the permutation representation of $G$ up to equivalence.

\begin{obs}
Proposition~\ref{prop:dependence_sigma} shows that the construction depends on the pair
$(G,\sigma)$ rather than on $G$ alone.

In general, two different surjective homomorphisms $\sigma_1,\sigma_2\colon G \to S_n$
may yield non-isomorphic permutational wreath pullbacks, even when defined on the same underlying group $G$. 
This reflects the fact that the isomorphism type is determined by the induced action of $G$
on $H^n$.
\end{obs}

We conclude this section with a basic finiteness property.

\begin{prop}\label{prop:finitely_presented}
If $G$ and $H$ are finitely generated (respectively finitely presented), then $H \wr_\sigma G$ is finitely generated (respectively finitely presented).
\end{prop}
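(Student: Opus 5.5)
The plan is to treat both claims with the standard facts about extensions, namely that the class of finitely generated (resp.\ finitely presented) groups is closed under extensions, and to apply them to the short exact sequence \eqref{eq:extension}. First, $H^n$ is finitely generated (resp.\ finitely presented) whenever $H$ is. For generation, take a finite generating set $S$ of $H$ and embed it in each coordinate factor. For presentations, if $H=\ang{S\mid R}$ is finite, then
\[
H^n=\ang{S_1,\dots,S_n \mid R_1,\dots,R_n,\ [s_i,t_j]=1 \ (s\in S,\ t\in S,\ i\neq j)},
\]
where $S_i$ and $R_i$ are copies of $S$ and $R$ in the $i$-th factor. This is finite.

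For the finite generation claim, let $X$ be a finite generating set of $G$. Then the finite set $\{(\iota_i(s),1)\}\cup\{(1,x)\mid x\in X\}$ generates $H\wr_\sigma G$, since every element factors as $(\mathbf{h},1)(1,g)$.

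For finite presentability, I would write an explicit semidirect product presentation. Take a finite presentation $G=\ang{X\mid Q}$ and the above finite presentation of $H^n$. The generators are $S_1\cup\dots\cup S_n\cup X$, and the relations are:
\begin{enumerate}
\item the relations of $H^n$;
\item the relations $Q$;
\item for each $x\in X$, $i$, and $s\in S$, the conjugation relation $x\,s_i\,x^{-1}=s_{j}$, where $j$ is the index determined by the permutation $\sigma(x)$ according to the action in the definition (explicitly, $j=\sigma(x)^{-1}(i)$ or $\sigma(x)(i)$, to be checked against the convention in \eqref{eq:multiplication}).
\end{enumerate}
This gives finitely many relations. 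The key step is to verify that the presented group $P$ maps isomorphically onto $H\wr_\sigma G$. The map is surjective by construction.

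For injectivity, the relations in (3) make the subgroup $N$ generated by the $S_i$ normal in $P$, because conjugation by $x^{\pm1}$ permutes the generators of $N$. Hence $P=N\cdot\ang{X}$. The quotient $P/N$ is a quotient of $\ang{X\mid Q}=G$, and it surjects onto $G$ via the composition with the projection. Therefore $P/N\cong G$, and $N$ is a quotient of $H^n$ mapping onto $H^n$, so $N\cong H^n$. A standard five-lemma style argument on the two extensions then gives $P\cong H\wr_\sigma G$.

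The main obstacle is this injectivity bookkeeping. To keep it clean, I would instead cite the general lemma that for $1\to N\to E\to Q\to 1$ with $N$ and $Q$ finitely presented, $E$ is finitely presented (P.~Hall). The explicit presentation would then serve only as a concrete illustration, and the index convention in (3) only needs to be consistent with the action.
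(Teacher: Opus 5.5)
Your proposal is correct and follows essentially the same route as the paper, which notes that $H^n$ is finitely generated (resp.\ finitely presented) when $H$ is, and then invokes the standard closure of these properties under semidirect products. Your explicit presentation with the conjugation relations $x s_i x^{-1}=s_j$ makes that cited result concrete. The injectivity argument you sketch (normality of $N$, then $P/N\cong G$, then $N\cong H^n$, then the short five lemma) is already complete, so the fallback to P.~Hall's lemma is optional.
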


\begin{proof}
Since $H^n$ is finitely generated (respectively finitely presented) whenever $H$ is, and $H \wr_\sigma G$ is a semidirect product of $H^n$ by $G$, the result follows from standard results on semidirect products (see, for instance, \cite{MKS}).
\end{proof}


\section{Structural properties}\label{sec:structure}

Throughout this section, we assume that $\sigma\colon G \to S_n$ is surjective. Certain results will require additional restrictions on $n$, which will be specified when needed. 
In this section we study structural aspects of the permutational wreath pullback
\[
W := H \wr_\sigma G = H^n \rtimes_\sigma G,
\]
where $\sigma\colon G \to S_n$ is assumed to be surjective and $n\ge 2$.

\begin{obs}\label{rem:delta}
We shall frequently consider the diagonal embedding
\[
\Delta \colon H \to H^n, 
\qquad 
\Delta(h)=(h,\dots,h),
\]
and denote by $\Delta(H)$ its image. The subgroup $\Delta(H)$ is invariant under the action of $G$, since permutation of coordinates preserves diagonal elements.
\end{obs}

\begin{lema}\label{lem:fixed_points}
Suppose that $\sigma$ is surjective. Then the fixed-point subgroup
\[
(H^n)^{G}
=
\set{\mathbf{h}\in H^n}{g\mathbf{h}=\mathbf{h}\ \text{for all } g\in G}
\]
coincides with $\Delta(H)$.
\end{lema}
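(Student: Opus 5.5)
The plan is to prove the two inclusions separately, using only the definition of the action and the surjectivity of $\sigma$.

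\emph{Inclusion $\Delta(H)\subseteq (H^n)^G$.} Take $\mathbf{h}=\Delta(h)=(h,\dots,h)$ and any $g\in G$. By the defining formula, $g\cdot\mathbf{h}=(h_{\sigma(g)(1)},\dots,h_{\sigma(g)(n)})$. Every coordinate equals $h$, so this is again $(h,\dots,h)$. This is exactly Remark~\ref{rem:delta}, and surjectivity is not needed here.

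\emph{Inclusion $(H^n)^G\subseteq\Delta(H)$.} Let $\mathbf{h}=(h_1,\dots,h_n)$ be fixed by all of $G$. The key point is that $S_n$ acts transitively on $\{1,\dots,n\}$. For each $i$, I would use surjectivity of $\sigma$ to pick $g_i\in G$ with $\sigma(g_i)$ equal to the transposition $(1\,i)$, or any permutation sending $1$ to $i$. Comparing first coordinates in $g_i\cdot\mathbf{h}=\mathbf{h}$ gives $h_{\sigma(g_i)(1)}=h_1$, that is, $h_i=h_1$. Since this holds for every $i$, we get $\mathbf{h}=\Delta(h_1)\in\Delta(H)$.

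There is no real obstacle. The only point needing care is the convention in the action formula: the $j$-th coordinate of $g\cdot\mathbf{h}$ is $h_{\sigma(g)(j)}$, whether or not this is a left action. This affects only which index gets compared, and transitivity covers every case. In fact transitivity of $\sigma(G)$ already suffices, so full surjectivity is more than is needed.
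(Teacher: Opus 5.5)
Your proof is correct and follows essentially the same approach as the paper. Surjectivity of $\sigma$ makes every coordinate permutation available, and invariance under permutations sending index $1$ to each $i$ forces all coordinates to coincide; the paper compresses this into one line, while you spell out the transitivity step and the easy inclusion $\Delta(H)\subseteq (H^n)^G$, and your remark that transitivity of $\sigma(G)$ already suffices is a valid slight sharpening.
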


\begin{proof}
Since $\sigma(G)=S_n$, the action of $G$ on $H^n$ contains all permutations of coordinates. 
Thus $\mathbf{h}=(h_1,\dots,h_n)$ is fixed by $G$ if and only if it is invariant under all permutations in $S_n$, which happens precisely when $h_1=\dots=h_n$. 
Hence $(H^n)^G=\Delta(H)$.
\end{proof}

\subsection{The center and abelianization}

We begin with a description of the center of $W$ in full generality. 
To do this, we consider the subgroup $\Delta(H)$ described in Remark~\ref{rem:delta}. 

\begin{teo}\label{teo:center}
Let $W = H \wr_\sigma G$ and suppose that $\sigma\colon G \to S_n$ is surjective. Then
\[
Z(W) = \Delta(Z(H)) \times \bigl(Z(G)\cap \ker{\sigma}\bigr).
\]
In particular, if $Z(G)\subseteq \ker{\sigma}$, then
\[
Z(W)=\Delta(Z(H))\times Z(G).
\]
\end{teo}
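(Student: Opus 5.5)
We prove the two inclusions directly from the multiplication rule \eqref{eq:multiplication}. For $\supseteq$: take $\mathbf{z}=\Delta(z)$ with $z\in Z(H)$. Then $(\mathbf{z},1)$ commutes with every $(\mathbf{k},1)$, since $\mathbf{z}$ is central in $H^n$. It also commutes with every $(1,g)$, because $g\mathbf{z}=\mathbf{z}$ by Remark~\ref{rem:delta}. Next take $c\in Z(G)\cap\ker{\sigma}$. It acts trivially on $H^n$, so $(1,c)(\mathbf{k},x)=(\mathbf{k},cx)$ and $(\mathbf{k},x)(1,c)=(\mathbf{k},xc)$, and these agree because $c\in Z(G)$. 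Both subgroups are therefore central. They intersect trivially, so their product is a direct product inside $Z(W)$.

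For $\subseteq$: let $(\mathbf{h},g)\in Z(W)$. We extract conditions by testing against three families of elements.
\begin{enumerate}
\item Commuting with $(1,x)$ for all $x\in G$ gives $(\mathbf{h},gx)=(x\mathbf{h},xg)$. Hence $g\in Z(G)$ and $x\mathbf{h}=\mathbf{h}$ for all $x$, so Lemma~\ref{lem:fixed_points} gives $\mathbf{h}=\Delta(h)$ for some $h\in H$.
\item Commuting with $(\mathbf{k},1)$ for all $\mathbf{k}\in H^n$ gives $\mathbf{h}\cdot g\mathbf{k}=\mathbf{k}\cdot\mathbf{h}$. Taking $\mathbf{k}$ with a single nontrivial coordinate $k$ in position $j$, the left side has $hk$ in position $\sigma(g)^{-1}(j)$, while the right side has $kh$ in position $j$. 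Also $\mathbf{h}$ is diagonal, so every other coordinate of both sides equals $h$.
\item Suppose $\sigma(g)\neq 1$ and pick $j$ with $i:=\sigma(g)^{-1}(j)\ne j$. Comparing position $j$ gives $h=kh$, so $k=1$, contradicting the choice of $k\ne1$. This uses that $H$ is nontrivial, which is a standing assumption of the paper.
\end{enumerate}
So $\sigma(g)=1$, i.e.\ $g\in\ker{\sigma}$. Then the identity from step~2 reads $hk=kh$ for all $k$, so $h\in Z(H)$.

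This yields $(\mathbf{h},g)=(\Delta(h),1)(1,g)$ with $h\in Z(H)$ and $g\in Z(G)\cap\ker{\sigma}$, which proves equality. For the final statement, note that if $Z(G)\subseteq\ker{\sigma}$ then $Z(G)\cap\ker{\sigma}=Z(G)$.

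The main obstacle is step~3, showing that $\sigma(g)$ is trivial. The coordinate bookkeeping must follow the paper's convention $g\cdot(h_i)=(h_{\sigma(g)(i)})$, and the argument relies on nontriviality of $H$. Since the conclusion does not depend on the direction of the permutation, it suffices to choose a single test vector supported on one coordinate moved by $\sigma(g)$.
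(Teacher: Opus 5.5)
Your proof is correct and follows essentially the same route as the paper. Commuting with $(1,x)$ gives $g\in Z(G)$ and, via Lemma~\ref{lem:fixed_points}, that $\mathbf{h}$ is diagonal; a test vector supported on one coordinate moved by $\sigma(g)$ then forces $\sigma(g)=1$ (using $H\neq 1$), which yields $h\in Z(H)$, and the reverse inclusion is checked directly. You also note explicitly that the two factors intersect trivially, a small detail the paper leaves implicit.
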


\begin{proof}
Write elements of $W$ as $(\mathbf{h},g)$ with $\mathbf{h}\in H^n$ and $g\in G$.

Suppose $(\mathbf{h},g)\in Z(W)$. First, commuting with $(1,x)$ for arbitrary $x\in G$ gives
\[
(\mathbf{h},g)(1,x)=(\mathbf{h},gx)
\quad\text{and}\quad
(1,x)(\mathbf{h},g)=(x\mathbf{h},xg).
\]
Hence $g\in Z(G)$ and $\mathbf{h}=x\mathbf{h}$ for all $x\in G$.

Since $\sigma$ is surjective, the action of $G$ on $H^n$ contains all permutations of coordinates. 
Thus $\mathbf{h}$ is invariant under the full symmetric group $S_n$, and by Lemma~\ref{lem:fixed_points} we have $\mathbf{h}\in \Delta(H)$.

Next, commuting with arbitrary elements $(\mathbf{k},1)\in H^n$ yields
\[
(\mathbf{h},g)(\mathbf{k},1)
=
(\mathbf{h}\cdot g\mathbf{k},g)
=
(\mathbf{k}\cdot \mathbf{h},g)
=
(\mathbf{k},1)(\mathbf{h},g),
\]
and hence
\[
\mathbf{h}\cdot g\mathbf{k} = \mathbf{k}\cdot \mathbf{h}
\quad \text{for all } \mathbf{k}\in H^n.
\]
Rewriting, we obtain
\[
g\mathbf{k} = \mathbf{h}^{-1}\mathbf{k}\mathbf{h}.
\]

Write $\mathbf{h}=(a,\dots,a)\in \Delta(H)$. Then conjugation by $\mathbf{h}$ acts coordinatewise:
\[
\mathbf{h}^{-1}\mathbf{k}\mathbf{h}
=
(a^{-1}k_1 a,\dots,a^{-1}k_n a),
\]
whereas $g\mathbf{k}$ is obtained by permuting the coordinates via $\sigma(g)$:
\[
g\mathbf{k}
=
(k_{\sigma(g)(1)},\dots,k_{\sigma(g)(n)}).
\]

If $\sigma(g)\neq 1$, then there exist indices $i\neq j$ with $\sigma(g)(i)=j$.
Since $H$ is non-trivial, choose $x\in H$ with $x\neq 1$, and define $\mathbf{k}\in H^n$ by
\[
k_j=x,\quad \text{and } k_\ell=1 \text{ for } \ell\neq j.
\]
Then the $i$-th coordinate of $g\mathbf{k}$ is $x$, while the $i$-th coordinate of
$\mathbf{h}^{-1}\mathbf{k}\mathbf{h}$ is $a^{-1}1a=1$, a contradiction.
Hence $\sigma(g)=1$, and thus $g\in \ker\sigma$.

It follows that $g$ acts trivially on $H^n$, and the relation above reduces to
\[
\mathbf{h}\mathbf{k}=\mathbf{k}\mathbf{h}
\quad \text{for all } \mathbf{k}\in H^n,
\]
so $\mathbf{h}\in Z(H^n)$.
Since $\mathbf{h}\in \Delta(H)$, we conclude that $\mathbf{h}\in \Delta(Z(H))$.

Conversely, let $\mathbf{h}\in \Delta(Z(H))$ and $g\in Z(G)\cap\ker{\sigma}$.
Since $g\in \ker\sigma$, it acts trivially on $H^n$, and hence
\[
(\mathbf{h},g)(\mathbf{k},1)=(\mathbf{h}\mathbf{k},g)
\quad\text{and}\quad
(\mathbf{k},1)(\mathbf{h},g)=(\mathbf{k}\mathbf{h},g)
\]
for all $\mathbf{k}\in H^n$.
As $\mathbf{h}\in Z(H^n)$, these are equal, so $(\mathbf{h},g)$ commutes with $H^n$.

Moreover, since $g\in Z(G)$ and $\mathbf{h}$ is fixed under the action of $G$,
we also have
\[
(\mathbf{h},g)(1,x)=(\mathbf{h},gx)=(\mathbf{h},xg)=(1,x)(\mathbf{h},g)
\]
for all $x\in G$.
Thus $(\mathbf{h},g)$ commutes with both $H^n$ and $G$, and hence lies in $Z(W)$.

The final statement follows immediately from the identity
\[
Z(G)\cap\ker\sigma = Z(G)
\]
under the assumption $Z(G)\subseteq \ker\sigma$.
\end{proof}

\begin{obs}
Theorem~\ref{teo:center} shows that
\[
Z(W)\cap H^n = \Delta(Z(H)).
\]
In particular, the central contribution arising from the subgroup $H^n$
is entirely determined by the diagonal copy of $Z(H)$.
Thus even when $H$ is non-abelian, only its center contributes to the center of $W$.
\end{obs}

We now determine the abelianization of $W$.

\begin{teo}\label{teo:abelianization}
Let $W = H \wr_\sigma G$ with $\sigma$ surjective. Then there is a natural isomorphism
\[
W^{ab} \cong H^{ab} \times G^{ab}.
\]
\end{teo}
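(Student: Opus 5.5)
We use the general formula for the abelianization of a semidirect product: for $W=N\rtimes G$,
\[
W^{ab}\cong \bigl(N^{ab}\bigr)_G\times G^{ab},
\]
where $(N^{ab})_G$ denotes the coinvariants of the induced $G$-action on $N^{ab}$. Rather than quote this, we construct the isomorphism directly. Define
\[
\Psi\colon W\to H^{ab}\times G^{ab},\qquad
\Psi(\mathbf{h},g)=\bigl(\overline{h_1h_2\cdots h_n},\ \overline{g}\bigr),
\]
where bars denote images in the abelianizations.

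\textbf{Step 1: $\Psi$ is a homomorphism.} By \eqref{eq:multiplication}, $(\mathbf{h},g)(\mathbf{k},x)=(\mathbf{h}\cdot g\mathbf{k},gx)$. The first component of the image is the class of
\[
h_1\cdots h_n\,k_{\sigma(g)(1)}\cdots k_{\sigma(g)(n)}.
\]
In the abelian group $H^{ab}$ this equals $\overline{h_1\cdots h_n}\cdot\overline{k_1\cdots k_n}$, since a permutation of the factors does not change the product there. The second component is multiplicative trivially. Surjectivity is clear from the elements $((h,1,\dots,1),g)$.

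\textbf{Step 2: $\ker\Psi=[W,W]$.} Since the target is abelian, $[W,W]\subseteq\ker\Psi$. For the reverse inclusion, let $(\mathbf{h},g)\in\ker\Psi$, so that $g\in[G,G]$ and $h_1\cdots h_n\in[H,H]$. The embedded copy of $G$ gives $(1,g)\in[W,W]$, so it suffices to show $(\mathbf{h},1)\in[W,W]$.

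\begin{itemize}
\item \emph{Moving coordinates.} For $x\in G$ we have the commutator $(1,x)(\mathbf{k},1)(1,x)^{-1}(\mathbf{k},1)^{-1}=(x\mathbf{k}\cdot\mathbf{k}^{-1},1)$. Let $e_i(h)$ be the element with $h$ in coordinate $i$ and $1$ elsewhere. Surjectivity of $\sigma$ lets us choose $x$ with $\sigma(x)$ sending $j$ to $i$, so that $x\,e_j(h)=e_i(h)$; the commutator then shows
\[
e_i(h)\equiv e_j(h)\pmod{[W,W]}.
\]
\item \emph{Collecting into one coordinate.} Using this congruence, $(\mathbf{h},1)\equiv e_1(h_1\cdots h_n)$ modulo $[W,W]$. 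Here we also use that the $e_i(\cdot)$ for different $i$ commute, and that $e_1$ is a homomorphism.
\item \emph{Conclusion.} Since $h_1\cdots h_n\in[H,H]$ and $e_1([H,H])\subseteq[W,W]$, we get $(\mathbf{h},1)\in[W,W]$.
\end{itemize}

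The main point requiring care is the collection step. Coordinates commute with each other only across different indices, so each coordinate must be moved to coordinate $1$ separately, and the resulting factors combined carefully; this works because the congruences hold modulo a normal subgroup. Naturality of the isomorphism follows because $\Psi$ is defined functorially in $H$ and $G$.
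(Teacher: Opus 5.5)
Your proof is correct, but it is organized differently from the paper's. The paper quotes the general formula $W^{ab}\cong N/[N,W]\times G^{ab}$ for the split extension $W=N\rtimes G$ with $N=H^n$. It then computes $N/[N,W]$ as the $S_n$-coinvariants of the permutation module $(H^{ab})^n$ and identifies these with $H^{ab}$.

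You instead write down the explicit homomorphism $\Psi(\mathbf{h},g)=(\overline{h_1\cdots h_n},\overline{g})$ and compute its kernel by hand. Your commutators $(x\mathbf{k}\cdot\mathbf{k}^{-1},1)$ play the role of the relations $gm-m$ defining the coinvariants. The inclusion $e_1([H,H])\subseteq[W,W]$ plays the role of passing from $N$ to $N^{ab}$. In effect you prove the semidirect-product formula in this special case rather than citing it.

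Your route gives an explicit formula for the isomorphism, and with it a transparent justification of naturality in $H$ and in $(G,\sigma)$. It also makes precise the paper's informal step that the coinvariants ``identify all coordinates, so that all components become equal'': the quotient is detected exactly by the product of the coordinates in $H^{ab}$. The paper's route is shorter and places the computation in the standard coinvariant framework, which applies verbatim to other permutation modules.

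One cosmetic point. Whether $x\,e_j(h)$ equals $e_{\sigma(x)(j)}(h)$ or $e_{\sigma(x)^{-1}(j)}(h)$ depends on how one reads the paper's coordinate formula for the action. Surjectivity of $\sigma$ supplies a suitable $x$ in either case, so nothing in your argument is affected.
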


\begin{proof}
Let $N:=H^n\lhd W$. Since $W=N\rtimes G$, it follows from the standard description of the abelianization of a semidirect product (see, e.g., \cite[Chapter~I.5]{MKS}) that
\[
W^{ab}\cong \frac{N}{[N,W]} \times G^{ab},
\]
where $[N,W]$ is the subgroup of $N$ generated by $[N,N]$ and all elements of the form $(g\cdot \mathbf{n})\mathbf{n}^{-1}$ with $g\in G$ and $\mathbf{n}\in N$.

Now
\[
N^{ab}\cong (H^{ab})^n.
\]
The action of $G$ on $N^{ab}$ factors through $\sigma(G)=S_n$ and permutes the coordinates. 
Hence the subgroup generated by all elements of the form
\[
(g\cdot \mathbf{a})\mathbf{a}^{-1},
\qquad g\in G,\ \mathbf{a}\in (H^{ab})^n,
\]
identifies all coordinates in $(H^{ab})^n$, so that all components become equal.
Therefore 
\[
\frac{(H^{ab})^n}{\langle (g\cdot \mathbf{a})\mathbf{a}^{-1} \mid g\in G,\ \mathbf{a}\in (H^{ab})^n\rangle}
\cong H^{ab}.
\]

Since
\[
\frac{N}{[N,W]}
\cong
\frac{N^{ab}}{\langle (g\cdot \mathbf{a})\mathbf{a}^{-1} \mid g\in G,\ \mathbf{a}\in N^{ab}\rangle},
\]
it follows that
\[
\frac{N}{[N,W]}\cong H^{ab}.
\]
Consequently,
\[
W^{ab}\cong H^{ab}\times G^{ab}.
\]
\end{proof}

Theorem~\ref{teo:abelianization} shows that the contribution of $H$ to the abelianization of $W$ depends only on $H^{ab}$. In particular, if $H$ is perfect, then 
\[
W^{ab}\cong G^{ab}.
\]

\subsection{On condition {$(\ast)$}}\label{subsec:conditionA}

In subsequent sections, several rigidity and characteristicity results rely on the following structural hypothesis.

\noindent
\textbf{($\ast$)} Every abelian normal subgroup of $W=H \wr_\sigma G$ is contained in $H^n$.

Condition $(\ast)$ expresses that $H^n$ contains every abelian normal subgroup of $W$,
and hence is the maximal abelian normal subgroup of $W$. 
In this subsection we show that, for $n\ge 5$, condition {$(\ast)$} is equivalent to a natural condition on the base group $G$.

Throughout, let
\[
W = H \wr_\sigma G = H^n \rtimes_\sigma G,
\]
where $\sigma\colon G \to S_n$ is surjective and $n\ge 5$.

\begin{lema}\label{lem:abel_pr_kernel}
Let $A \lhd W$ be an abelian normal subgroup. Then
\[
\pi(A) \le \ker{\sigma},
\]
where $\pi\colon W \to G$ is the canonical projection.
\end{lema}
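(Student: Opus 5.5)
The plan is to use the hypothesis $n\ge 5$ through the simplicity of $A_n$ and the structure of normal subgroups of $S_n$.

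First I will push $A$ down to $S_n$. Since $A\lhd W$ and $\pi$ is surjective, $\pi(A)$ is a normal subgroup of $G$. Since $\sigma$ is surjective, $\sigma(\pi(A))$ is then a normal subgroup of $S_n$. It is also abelian, being a homomorphic image of the abelian group $A$. For $n\ge 5$ the only normal subgroups of $S_n$ are $1$, $A_n$ and $S_n$, and neither $A_n$ nor $S_n$ is abelian. Hence $\sigma(\pi(A))=1$, that is, $\pi(A)\le\ker\sigma$.

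This argument never uses the semidirect product structure beyond the surjectivity of $\sigma\circ\pi\colon W\to S_n$. The only real input is the classification of normal subgroups of $S_n$ for $n\ge 5$, namely that $A_n$ is simple and is the unique proper nontrivial normal subgroup. So there is no genuine obstacle; the only point to state carefully is that images of normal subgroups under surjective homomorphisms remain normal.

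If one wanted to avoid the classification, an alternative is a direct computation. Take $(\mathbf{h},g)\in A$ with $\sigma(g)\neq 1$ and conjugate by elements $(\mathbf{1},x)$. Normality of $A$ then produces elements of $A$ whose images $\sigma(x)\sigma(g)\sigma(x)^{-1}$ do not commute with $\sigma(g)$, which contradicts the commutativity of $A$. However, the normal-subgroup argument above is cleaner, and it is the route I would take.
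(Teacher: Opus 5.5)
Your proof is correct and is essentially the paper's own argument: push $A$ down to the abelian normal subgroup $\sigma(\pi(A))$ of $S_n$, and use that for $n\ge 5$ the only normal subgroups of $S_n$ are $1$, $A_n$ and $S_n$, of which only $1$ is abelian. Your sketched alternative also depends on $n\ge 5$, since it needs a conjugate of $\sigma(g)$ that fails to commute with $\sigma(g)$. That can fail for small $n$: for $n=4$, the double transpositions all commute, because the Klein four-group is an abelian normal subgroup of $S_4$.
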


\begin{proof}
Since $A$ is abelian and normal in $W$, its image $\pi(A)$ is an abelian normal subgroup of $G$. Applying $\sigma$, we see that $\sigma(\pi(A))$ is an abelian normal subgroup of $S_n$. 
Since $n\ge 5$, the symmetric group $S_n$ has no non-trivial abelian normal subgroup (its only non-trivial normal subgroup is $A_n$, which is non-abelian). Therefore
\[
\sigma(\pi(A))=1,
\]
and hence $\pi(A)\le \ker{\sigma}$.
\end{proof}

Lemma~\ref{lem:abel_pr_kernel} reduces the study of abelian normal subgroups of $W$ to the subgroup $\ker{\sigma}\le G$. 
We now show that, for $n\ge 5$, condition $(\ast)$ is equivalent to a purely group-theoretic condition on $G$.

\begin{teo}\label{teo:conditionA_equiv}
Let $n\ge 5$. Then condition {$(\ast)$} is equivalent to the following condition:

\noindent
\textbf{($\ast \ast$)} The subgroup $\ker{\sigma}$ contains no non-trivial abelian normal subgroup of $G$. 
\end{teo}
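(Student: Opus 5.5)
The plan is to prove the two implications separately. The direction $(\ast\ast)\Rightarrow(\ast)$ follows directly from Lemma~\ref{lem:abel_pr_kernel}. The direction $(\ast)\Rightarrow(\ast\ast)$ works by lifting an abelian normal subgroup of $G$ lying in $\ker{\sigma}$ to an abelian normal subgroup of $W$ that meets $H^n$ trivially.

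For $(\ast\ast)\Rightarrow(\ast)$, let $A\lhd W$ be abelian and let $\pi\colon W\to G$ be the canonical projection. Since $\pi$ is surjective, $\pi(A)$ is an abelian normal subgroup of $G$. Since $n\ge 5$, Lemma~\ref{lem:abel_pr_kernel} gives $\pi(A)\le\ker{\sigma}$. Condition $(\ast\ast)$ then forces $\pi(A)=1$, that is, $A\le \ker{\pi}=H^n$, which is $(\ast)$.

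For $(\ast)\Rightarrow(\ast\ast)$, let $B\lhd G$ be abelian with $B\le\ker{\sigma}$, and consider the subgroup
\[
\widetilde B=\{(1,b)\mid b\in B\}\le W,
\]
which is isomorphic to $B$ and hence abelian. The key step is to check that $\widetilde B$ is normal in $W$, and here we use that every $b\in B$ acts trivially on $H^n$.
\begin{itemize}
\item For $\mathbf{k}\in H^n$, the multiplication rule \eqref{eq:multiplication} gives $(\mathbf{k},1)(1,b)(\mathbf{k}^{-1},1)=(\mathbf{k}\cdot b\mathbf{k}^{-1},b)=(1,b)$, since $b\mathbf{k}^{-1}=\mathbf{k}^{-1}$.
\item For $x\in G$, we have $(1,x)(1,b)(1,x)^{-1}=(1,xbx^{-1})\in\widetilde B$ because $B\lhd G$.
\end{itemize}
Since $W$ is generated by $H^n$ and the copy of $G$, $\widetilde B$ is normal in $W$. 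By $(\ast)$, $\widetilde B\le H^n$. But $\widetilde B\cap H^n=\{(1,1)\}$, so $B=1$, which is $(\ast\ast)$.

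I do not expect a genuine obstacle. The only points needing care are the normality computation for $\widetilde B$, which relies essentially on $B\le\ker{\sigma}$ (otherwise conjugation by $H^n$ would produce non-trivial $H^n$-components), and the use of $n\ge5$, which enters only through Lemma~\ref{lem:abel_pr_kernel}.
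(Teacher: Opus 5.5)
Your proposal is correct and follows essentially the same route as the paper. For $(\ast\ast)\Rightarrow(\ast)$ you apply Lemma~\ref{lem:abel_pr_kernel} to $\pi(A)$, and for the converse you lift $B$ to $\widetilde B=\{(1,b)\mid b\in B\}$ and use $\widetilde B\cap H^n=1$. Your explicit check that $\widetilde B\lhd W$, which uses that $B\le\ker{\sigma}$ acts trivially on $H^n$, fills in a step the paper only asserts.
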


\begin{proof}
Assume first that {$(\ast)$} holds.
Let $B \le \ker{\sigma}$ be an abelian normal subgroup of $G$.
Then
\[
\widetilde{B}:=\{(1,b)\mid b\in B\}\le W
\]
is an abelian normal subgroup of $W$. By {$(\ast)$}, we deduce $\widetilde{B}\le H^n$, which forces $B=1$, since $H^n\cap \{(1,b)\mid b\in G\}=\{1\}$. 
Thus {$(\ast \ast)$} holds.

Conversely, assume {$(\ast \ast)$} and let $A \lhd W$ be abelian. From Lemma~\ref{lem:abel_pr_kernel}, we have $\pi(A)\le \ker{\sigma}$. Since $\pi(A)$ is abelian and normal in $G$, condition {$(\ast \ast)$} implies $\pi(A)=1$.
Therefore $A\le \ker{\pi}=H^n$. Hence {$(\ast)$} holds.
\end{proof}

Theorem~\ref{teo:conditionA_equiv} provides a practical criterion that can be verified independently in specific families of groups. 
In braid-type settings, the homomorphism $\sigma\colon G \to S_n$ is typically not unique,
and different choices may lead to distinct subgroups $\ker{\sigma}$.
Thus condition~$(\ast)$ reduces to determining whether the chosen subgroup
$\ker{\sigma}$ contains a non-trivial abelian normal subgroup of $G$.

This criterion does not apply to the classical braid group $B_n$, since
\[
Z(B_n)=Z(P_n)=\langle \Delta_n^2\rangle \le P_n,
\]
where $\Delta_n^2$ denotes the full twist. Thus condition $(\ast\ast)$ fails for $B_n$.

More generally, Theorem~\ref{teo:conditionA_equiv} shows that the validity of $(\ast)$
is governed entirely by the normal subgroup structure of $\ker{\sigma}$ in $G$.
In particular, it reduces the problem to a purely group-theoretic condition on
$\ker{\sigma}$, namely the absence of non-trivial abelian normal subgroups. 
This reflects the fact that the permutational wreath pullback depends essentially on the choice of the permutation representation $\sigma$, as discussed in Section~\ref{sec:wreath}.

\subsection{Characteristicity of the permutational kernel and the $R_\infty$-property}

Throughout this subsection, let $H$ be a finitely generated abelian group, let $\sigma\colon G \to S_n$ be a surjective homomorphism, and assume that $n\ge 3$.  
We identify conditions ensuring that the normal subgroup
\[
H^n \lhd W=H \wr_\sigma G
\]
is characteristic, and derive consequences for the $R_\infty$-property. 
To formalize this intrinsic viewpoint, we introduce the following subgroup.

\begin{defi}\label{def:ax}
For a group $X$, define
\[
\mathcal{A}(X)
=
\langle A \le X \mid A \text{ is abelian and normal in } X \rangle.
\]
\end{defi}

We shall work under the following structural hypothesis:

\noindent
\textbf{($\ast$)} Every abelian normal subgroup of $W$ is contained in $H^n$.

For $n\ge 5$, condition {$(\ast)$} may be verified using Theorem~\ref{teo:conditionA_equiv}. 
This condition expresses that $H^n$ contains every abelian normal subgroup of $W$, and hence is intrinsically determined by the group.

\begin{prop}\label{prop:A_of_W}
Assume {$(\ast)$}. Then 
\[
\mathcal{A}(W)=H^n.
\]
In particular, $H^n$ is the largest abelian normal subgroup of $W$.
\end{prop}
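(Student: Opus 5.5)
We prove the two inclusions $\mathcal{A}(W)\le H^n$ and $H^n\le \mathcal{A}(W)$ separately, using the standing hypothesis of this subsection that $H$ is a finitely generated abelian group.

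For $\mathcal{A}(W)\le H^n$, we use condition $(\ast)$ directly. Every abelian normal subgroup $A$ of $W$ satisfies $A\le H^n$. Hence the subgroup generated by all such $A$ is contained in $H^n$, because $H^n$ is itself a subgroup. This is the only place where $(\ast)$ enters.

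For $H^n\le \mathcal{A}(W)$, we show that $H^n$ is itself one of the generators in Definition~\ref{def:ax}. It is normal in $W$ as the kernel of the projection $\pi\colon W\to G$ in the extension \eqref{eq:extension}. It is abelian because $H$ is abelian, and $H^n$ is a direct product of copies of $H$. Hence $H^n$ is an abelian normal subgroup of $W$, and so $H^n\le \mathcal{A}(W)$. Combining the two inclusions gives $\mathcal{A}(W)=H^n$.

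For the ``in particular'', we argue as follows. $H^n$ is an abelian normal subgroup of $W$, and by $(\ast)$ it contains every abelian normal subgroup of $W$. It is therefore the largest abelian normal subgroup with respect to inclusion, and it is unique with this property. Equivalently, in this situation the subgroup generated by all abelian normal subgroups is again abelian and normal.

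No step is a serious obstacle. The one point to keep in view is that the abelianness of $H$ is genuinely needed: without it, $H^n$ would not be abelian and the reverse inclusion would fail. The statement relies on the standing assumption that $H$ is finitely generated abelian, of which only commutativity is used.
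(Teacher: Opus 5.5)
Your proof is correct and is essentially the paper's argument. $H^n$ is an abelian normal subgroup, using the subsection's standing assumption that $H$ is abelian, which gives $H^n\le\mathcal{A}(W)$; condition $(\ast)$ gives the reverse inclusion $\mathcal{A}(W)\le H^n$.
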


\begin{proof}
Since $H^n$ is abelian and normal in $W$, we deduce $H^n \le \mathcal{A}(W)$.
Conversely, by assumption every abelian normal subgroup of $W$ is contained in $H^n$, so the subgroup generated by all of them is also contained in $H^n$. Hence $\mathcal{A}(W)=H^n$.
\end{proof}

Recall that a subgroup $K \le G$ is \emph{characteristic} if $\varphi(K)=K$ for every automorphism $\varphi\in\aut{G}$.

\begin{teo}\label{teo:Hn_characteristic}
Assume {$(\ast)$}. Then $H^n$ is characteristic in $W$.
\end{teo}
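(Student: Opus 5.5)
The plan is to identify $H^n$ intrinsically, as a subgroup that every automorphism must preserve. Since $H$ is abelian in this subsection, $H^n$ is an abelian normal subgroup of $W$. By Proposition~\ref{prop:A_of_W}, condition $(\ast)$ gives
\[
\mathcal{A}(W)=H^n,
\]
so it suffices to show that $\mathcal{A}(W)$ is characteristic in $W$ for any group $W$.

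For the key step, fix $\varphi\in\aut{W}$ and let $A\le W$ be abelian and normal. Then $\varphi(A)$ is abelian, since $\varphi$ is an injective homomorphism. It is also normal: for $w\in W$ write $w=\varphi(v)$, and then
\[
w\varphi(A)w^{-1}=\varphi(vAv^{-1})=\varphi(A).
\]
So $\varphi$ permutes the family of abelian normal subgroups of $W$. Because $\varphi$ maps the subgroup generated by a family to the subgroup generated by the images, we get $\varphi(\mathcal{A}(W))\le\mathcal{A}(W)$. Applying the same argument to $\varphi^{-1}$ gives the reverse inclusion, hence $\varphi(\mathcal{A}(W))=\mathcal{A}(W)$.

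Combining this with $\mathcal{A}(W)=H^n$ yields $\varphi(H^n)=H^n$ for every $\varphi\in\aut{W}$, which is the claim. A more direct route avoids $\mathcal{A}$: $\varphi(H^n)$ is an abelian normal subgroup, so $(\ast)$ gives $\varphi(H^n)\le H^n$, and likewise $\varphi^{-1}(H^n)\le H^n$, giving equality. I do not expect a real obstacle. The only point needing care is that we must have equality rather than mere inclusion $\varphi(H^n)\le H^n$, and applying the argument to $\varphi^{-1}$ (or using the canonical description via $\mathcal{A}(W)$) handles this. In particular no Hopfian-type hypothesis on $H^n$ is needed.
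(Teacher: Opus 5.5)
Your proof is correct and follows the paper's argument. The paper likewise identifies $H^n=\mathcal{A}(W)$ via Proposition~\ref{prop:A_of_W} and notes that $\mathcal{A}(W)$ is invariant under every automorphism; you verify that invariance explicitly, using $\varphi^{-1}$ to get equality, and your direct route applying $(\ast)$ to $\varphi(H^n)$ and $\varphi^{-1}(H^n)$ is equally valid.
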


\begin{proof}
From Proposition~\ref{prop:A_of_W}, we get $H^n=\mathcal{A}(W)$. 
Since $\mathcal{A}(W)$ is defined purely in terms of the group structure of $W$, it is invariant under every automorphism of $W$.
\end{proof}

\begin{obs}\label{obs:intrinsic_vs_sigma}
Theorem~\ref{teo:Hn_characteristic} highlights a fundamental distinction between intrinsic and representation-dependent features of the construction.

The subgroup $H^n$ is intrinsically characterized as $\mathcal{A}(W)$, and hence is invariant under all automorphisms of $W$, independently of any chosen decomposition.

In contrast, the validity of $(\ast)$ depends on the permutation representation $\sigma\colon G \to S_n$, through the induced action of $G$ on $H^n$. Consequently, different choices of $\sigma$ for a fixed group $G$ may lead to non-isomorphic groups $H \wr_\sigma G$ with distinct normal subgroup structures.

This interplay between intrinsic and representation-dependent properties is a recurring theme throughout the paper.
\end{obs}

We now recall some definitions and a general fact on twisted conjugacy classes. 
Consider a group $G$ and an endomorphism $\alpha$ of $G$. We say that two elements $x$ and $y$ of $G$ are twisted conjugate (via $\alpha$) if and only if there exists a $z\in G$ such that $x = z y \alpha(z)^{-1}$. It is easy to see that the relation of being twisted conjugate is an equivalence relation and the number of equivalence classes (also referred to as Reidemeister classes) is called the Reidemeister number $R(\alpha)$ of $\alpha$. This Reidemeister number is either a positive integer or $\infty$. 
A group $G$ has the $R_\infty$-property if every automorphism $\varphi\in\aut{G}$ has infinitely many Reidemeister conjugacy classes.

Reidemeister numbers originate in Nielsen--Reidemeister fixed point theory, where one studies fixed point classes of selfmaps.
For a continuous map $f\colon X \to X$, one has
\[
R(f)=R(f_\ast),
\]
where $f_\ast\colon \pi_1(X)\to \pi_1(X)$ is the induced endomorphism.
This connection motivates the study of groups with the $R_\infty$-property; see for instance \cite{DGO1,DGO2,DGO3,FG}.

\begin{lema}[{\cite[Lemma~26]{DGO2}}]\label{lem:R_infty_extension}
Let
\[
1 \to N \to E \to Q \to 1
\]
be a short exact sequence with $N$ characteristic in $E$. If $Q$ has the $R_\infty$-property, then $E$ has the $R_\infty$-property.
\end{lema}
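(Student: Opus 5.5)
The statement is Lemma~26 of \cite{DGO2}, so I would give the standard argument for it rather than rely on anything specific to $W$. Let $p\colon E\to Q$ be the projection and let $\varphi\in\aut{E}$. The plan is to push $\varphi$ down to $Q$, use that $Q$ has infinitely many twisted conjugacy classes for the induced automorphism, and lift these classes back to $E$.

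Since $N$ is characteristic, $\varphi(N)=N$. Therefore $\varphi$ induces a well-defined endomorphism
\[
\overline{\varphi}\colon Q\to Q,\qquad \overline{\varphi}(p(x))=p(\varphi(x)).
\]
Applying the same construction to $\varphi^{-1}$, which also preserves $N$, gives an inverse for $\overline{\varphi}$. Hence $\overline{\varphi}\in\aut{Q}$, and by the $R_\infty$-property of $Q$ we have $R(\overline{\varphi})=\infty$.

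Next I would show that $p$ maps $\varphi$-twisted conjugacy classes of $E$ onto $\overline{\varphi}$-twisted conjugacy classes of $Q$. Suppose $x=zy\varphi(z)^{-1}$ in $E$. Applying $p$ gives
\[
p(x)=p(z)\,p(y)\,\overline{\varphi}(p(z))^{-1},
\]
so $p(x)$ and $p(y)$ are $\overline{\varphi}$-twisted conjugate. Thus $p$ induces a map from the Reidemeister classes of $\varphi$ to those of $\overline{\varphi}$. This map is surjective because $p$ is surjective on elements. Consequently $R(\varphi)\ge R(\overline{\varphi})=\infty$. Since $\varphi$ was arbitrary, $E$ has the $R_\infty$-property.

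The only point needing care is checking that $\overline{\varphi}$ is an automorphism and not merely an endomorphism. The $R_\infty$-property concerns automorphisms of $Q$, so this is where the characteristic hypothesis on $N$ is really used: it is needed for both $\varphi$ and $\varphi^{-1}$ to descend to $Q$. Everything else is a direct computation.
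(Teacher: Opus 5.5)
Your proof is correct. The paper gives no argument of its own and simply cites \cite[Lemma~26]{DGO2}; your argument is the standard proof of that lemma. Characteristicity gives an induced $\overline{\varphi}\in\aut{Q}$, and $p$ induces a surjection from the Reidemeister classes of $\varphi$ onto those of $\overline{\varphi}$, so $R(\varphi)\ge R(\overline{\varphi})=\infty$.
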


Combining Theorem~\ref{teo:Hn_characteristic} with Lemma~\ref{lem:R_infty_extension}, we obtain:

\begin{teo}\label{teo:R_infty_general}
Assume {$(\ast)$}. If $G$ has the $R_\infty$-property, then
\[
W = H \wr_\sigma G
\]
has the $R_\infty$-property.
\end{teo}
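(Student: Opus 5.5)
The plan is to apply Lemma~\ref{lem:R_infty_extension} to the split extension \eqref{eq:extension},
\[
1 \longrightarrow H^n \longrightarrow W \stackrel{\pi}{\longrightarrow} G \longrightarrow 1,
\]
with $N=H^n$, $E=W$ and $Q=G$. The lemma needs only two inputs. First, $N$ must be characteristic in $E$. Second, $Q$ must have the $R_\infty$-property. The second input is exactly our hypothesis on $G$, so the whole argument reduces to checking the first.

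For the characteristicity of $H^n$, I will invoke Theorem~\ref{teo:Hn_characteristic}. Under $(\ast)$, Proposition~\ref{prop:A_of_W} identifies $H^n$ with $\mathcal{A}(W)$. This subgroup is defined purely group-theoretically, so every $\varphi\in\aut{W}$ satisfies $\varphi(H^n)=H^n$.

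To make clear why the lemma applies, I will sketch the mechanism. Any $\varphi\in\aut{W}$ preserves $H^n$, so it descends to an automorphism $\bar\varphi$ of $W/H^n\cong G$ with $\pi\circ\varphi=\bar\varphi\circ\pi$. The map $\pi$ then sends $\varphi$-twisted conjugacy classes of $W$ onto $\bar\varphi$-twisted conjugacy classes of $G$. Indeed, $\pi(z x\varphi(z)^{-1})=\pi(z)\pi(x)\bar\varphi(\pi(z))^{-1}$, and $\pi$ is surjective. Hence $R(\varphi)\ge R(\bar\varphi)=\infty$. Since $\varphi$ was arbitrary, $W$ has the $R_\infty$-property.

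There is no genuine obstacle here; the only substantive point is the characteristicity of $H^n$, which is already supplied by $(\ast)$ through Theorem~\ref{teo:Hn_characteristic}. The one point to state carefully is that $\bar\varphi$ is an automorphism of $G$ and not merely an endomorphism. This holds because $\varphi^{-1}$ also preserves $H^n$ and induces the inverse map on the quotient.
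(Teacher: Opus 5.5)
Your proposal is correct and follows the paper's proof exactly. The paper also takes characteristicity of $H^n$ from Theorem~\ref{teo:Hn_characteristic} (via $\mathcal{A}(W)=H^n$) and applies Lemma~\ref{lem:R_infty_extension} with $W/H^n\cong G$; your unpacking of the lemma ($\pi$ maps $\varphi$-classes onto $\bar\varphi$-classes, so $R(\varphi)\ge R(\bar\varphi)=\infty$, with $\bar\varphi$ an automorphism since $\varphi(H^n)=H^n$) is correct and simply makes explicit what the cited lemma supplies.
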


\begin{proof}
From Theorem~\ref{teo:Hn_characteristic}, the subgroup $H^n$ is characteristic in $W$.
Since $W/H^n \cong G$, the conclusion follows from Lemma~\ref{lem:R_infty_extension}.
\end{proof}

\begin{obs}\label{rem:braid_type_examples}

The general criterion above applies to several braid-type groups. 
In particular, for suitable epimorphisms onto $S_n$, condition $(\ast\ast)$ can be verified for kernels arising from the virtual braid group and the virtual twin group, leading to new families of permutational wreath pullbacks with the $R_\infty$-property.

These verifications require additional structural arguments and are developed in detail in Subsection~\ref{subsec:ast_ast}.

Once condition $(\ast)$ is verified for a given permutation representation $\sigma$, the corresponding framed group inherits the $R_\infty$-property from $G$ (see Theorem~\ref{teo:R_infty_general}). By Theorem~\ref{teo:conditionA_equiv}, verifying $(\ast)$ reduces to checking that $\ker{\sigma}$ contains no non-trivial abelian normal subgroup of $G$. Thus, the $R_\infty$-property for the framed group is governed entirely by the normal subgroup structure of the permutation representation $\sigma$.
\end{obs}

\subsection{Characteristic subgroups, splitting, and intrinsic rigidity}

In this subsection we study structural and intrinsic properties of the subgroup
of $W=H \wr_\sigma G$ lying over $\ker{\sigma}$, together with consequences
for automorphisms and rigidity phenomena.

Let
\[
P_\sigma=\ker{\sigma}\le G,
\qquad
PW=\pi^{-1}(P_\sigma)\le W,
\]
where $\pi\colon W\to G$ is the canonical projection.

We begin by describing the subgroup lying over $\ker{\sigma}$ and show that it admits a natural splitting as a direct product.

\begin{prop}\label{prop:pure_splits}
The subgroup $PW$ splits as a direct product:
\[
PW \cong H^n \times P_\sigma.
\]
\end{prop}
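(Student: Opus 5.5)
The plan is to identify $PW$ explicitly as a set and then check that the semidirect product structure restricts to a trivial action on it. By definition of the projection $\pi\colon W\to G$, $(\mathbf{h},g)\mapsto g$, we have
\[
PW=\pi^{-1}(P_\sigma)=\set{(\mathbf{h},g)\in W}{\mathbf{h}\in H^n,\ g\in P_\sigma}.
\]
So $PW$ is the internal semidirect product $H^n\rtimes P_\sigma$. Here $H^n$ is normal in $PW$ because it is normal in $W$, the subgroup $\{(1,g)\mid g\in P_\sigma\}\cong P_\sigma$ is a complement, and the action is the restriction of the action of $G$ on $H^n$ to $P_\sigma$.

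The key step is that this restricted action is trivial. For $g\in P_\sigma=\ker{\sigma}$ we have $\sigma(g)=1$, so by the definition of the action,
\[
g\cdot(h_1,\dots,h_n)=\bigl(h_{\sigma(g)(1)},\dots,h_{\sigma(g)(n)}\bigr)=(h_1,\dots,h_n).
\]
Hence the multiplication formula \eqref{eq:multiplication}, restricted to $PW$, reads $(\mathbf{h},g)(\mathbf{k},x)=(\mathbf{h}\mathbf{k},gx)$, which is exactly the multiplication in the direct product $H^n\times P_\sigma$.

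Therefore the identity map on underlying sets $(\mathbf{h},g)\mapsto(\mathbf{h},g)$ is an isomorphism $PW\to H^n\times P_\sigma$. Equivalently, $H^n$ and the copy of $P_\sigma$ are both normal in $PW$, commute elementwise, intersect trivially, and generate $PW$.

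There is no real obstacle. The only point needing care is that the complement $\{(1,g)\mid g\in P_\sigma\}$ is normal in $PW$, which follows from $(\mathbf{h},1)(1,g)(\mathbf{h},1)^{-1}=(\mathbf{h}\,(g\mathbf{h})^{-1},g)=(1,g)$, again using the triviality of the action of $\ker{\sigma}$.
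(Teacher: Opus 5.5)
Your proof is correct and takes the same approach as the paper: $PW=H^n\rtimes P_\sigma$, and $P_\sigma=\ker{\sigma}$ acts trivially on $H^n$, so the semidirect product reduces to a direct product. Your explicit checks of the restricted multiplication formula and of the normality of the complement just spell out what the paper states in one line.
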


\begin{proof}
Since $P_\sigma=\ker{\sigma}$ acts trivially on $H^n$, the semidirect product
$H^n \rtimes P_\sigma$ reduces to the direct product $H^n \times P_\sigma$.
\end{proof}

For the standard epimorphism $B_n\twoheadrightarrow S_n$, one has $P_\sigma=P_n$, and Proposition~\ref{prop:pure_splits} recovers the classical splitting of the pure framed braid group as
\[
\mathbb{Z}^n \times P_n.
\]
The same argument applies to any braid-type group equipped with a natural epimorphism to $S_n$.

We next investigate conditions under which this subgroup is preserved by automorphisms, that is, when it is characteristic in $W$.

\begin{prop}\label{prop:induced_on_quotient}
Assume $(\ast)$. Then every automorphism $\Phi\in\mathrm{Aut}(W)$ induces an automorphism
\[
\bar\Phi\in \mathrm{Aut}(W/H^n)\cong \mathrm{Aut}(G).
\]
\end{prop}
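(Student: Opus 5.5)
The plan is to first secure that $H^n$ is preserved by every automorphism, and then use the standard fact that an automorphism preserving a normal subgroup descends to the quotient. Under $(\ast)$, Theorem~\ref{teo:Hn_characteristic} (via Proposition~\ref{prop:A_of_W}, which identifies $H^n=\mathcal{A}(W)$) shows that $H^n$ is characteristic in $W$. So for any $\Phi\in\mathrm{Aut}(W)$ we have $\Phi(H^n)=H^n$; this is the only place where $(\ast)$ enters.

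Next I would define
\[
\bar\Phi\colon W/H^n\to W/H^n,\qquad wH^n\mapsto \Phi(w)H^n.
\]
It is well defined because $w'=wk$ with $k\in H^n$ gives $\Phi(w')=\Phi(w)\Phi(k)$ with $\Phi(k)\in H^n$. It is a homomorphism since $\Phi$ is. Applying the same construction to $\Phi^{-1}$, which also preserves $H^n$ because $\Phi(H^n)=H^n$, produces an inverse map. Hence $\bar\Phi$ is an automorphism, and $\Phi\mapsto\bar\Phi$ is a homomorphism $\mathrm{Aut}(W)\to\mathrm{Aut}(W/H^n)$.

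Finally, I would identify $W/H^n$ with $G$ using the extension \eqref{eq:extension}. The projection $\pi\colon W\to G$ is surjective with kernel $H^n$, so it induces an isomorphism $\bar\pi\colon W/H^n\to G$. Conjugation by $\bar\pi$ then gives $\mathrm{Aut}(W/H^n)\cong\mathrm{Aut}(G)$. Concretely, $\bar\Phi$ corresponds to the automorphism of $G$ given by $g\mapsto \pi(\Phi(1,g))$; this is the element of $\mathrm{Aut}(G)$ obtained by composing $\Phi$ with the section $g\mapsto(1,g)$ and then with $\pi$.

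There is no real obstacle here. The substantive input is the characteristicity of $H^n$, which the earlier results already supply. The remaining checks (well-definedness, bijectivity, and the transport of structure along $\bar\pi$) are routine.
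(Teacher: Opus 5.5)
Your proposal is correct and follows the paper's proof: under $(\ast)$, $H^n=\mathcal{A}(W)$ is characteristic, which relies, as in the paper, on the standing assumption that $H$ is abelian, so every automorphism descends to $W/H^n\cong G$. Your explicit checks and the formula $g\mapsto\pi(\Phi(1,g))$ spell out what the paper leaves as routine.
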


\begin{proof}
Since $H^n=\mathcal{A}(W)$ is characteristic in $W$, it is preserved by every automorphism,
and hence automorphisms of $W$ descend to automorphisms of the quotient $W/H^n\cong G$.
\end{proof}

Proposition~\ref{prop:induced_on_quotient} shows that automorphisms of $W$ induce automorphisms of $G$.
This observation allows us to lift characteristicity from $G$ to the subgroup $PW$.

\begin{teo}\label{teo:pure_characteristic}
Assume $(\ast)$ and suppose that $P_\sigma=\ker{\sigma}$ is characteristic in $G$.
Then the subgroup
\[
PW=\pi^{-1}(P_\sigma)\cong H^n\times P_\sigma
\]
is characteristic in $W$.
\end{teo}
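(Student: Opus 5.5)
The argument is a lifting through the quotient map $\pi\colon W\to W/H^n\cong G$. Let $\Phi\in\aut{W}$. By Theorem~\ref{teo:Hn_characteristic}, which applies because we assume $(\ast)$, $\Phi(H^n)=H^n$. Proposition~\ref{prop:induced_on_quotient} then gives an induced automorphism $\bar\Phi\in\aut{G}$, characterized by
\[
\pi\circ\Phi=\bar\Phi\circ\pi .
\]
One point needs care here: we identify $W/H^n$ with $G$ via $\pi$, so $\bar\Phi$ is defined by $\bar\Phi(\pi(w))=\pi(\Phi(w))$. This is well defined precisely because $\Phi(H^n)=H^n=\ker{\pi}$, and $\bar\Phi$ is bijective because $\Phi^{-1}$ also preserves $H^n$ and so induces the inverse map.

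Next we use the hypothesis that $P_\sigma$ is characteristic in $G$, which gives $\bar\Phi(P_\sigma)=P_\sigma$. For $w\in PW=\pi^{-1}(P_\sigma)$ we then compute
\[
\pi(\Phi(w))=\bar\Phi(\pi(w))\in\bar\Phi(P_\sigma)=P_\sigma ,
\]
so $\Phi(w)\in PW$. Hence $\Phi(PW)\subseteq PW$.

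Applying the same argument to $\Phi^{-1}$ gives $\Phi^{-1}(PW)\subseteq PW$, so $\Phi(PW)=PW$. Thus $PW$ is characteristic in $W$. The isomorphism $PW\cong H^n\times P_\sigma$ is just Proposition~\ref{prop:pure_splits} and needs no further argument.

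Every step is formal, so there is no real obstacle. The only substantive input is that $H^n$ is characteristic, and that is exactly where condition $(\ast)$ enters, through Theorem~\ref{teo:Hn_characteristic}.
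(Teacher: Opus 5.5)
Your proof is correct and follows the paper's argument. You use $(\ast)$ to get $\Phi(H^n)=H^n$, pass to the induced automorphism $\bar\Phi$ of $W/H^n\cong G$ (Proposition~\ref{prop:induced_on_quotient}), apply characteristicity of $P_\sigma$, and pull back along $\pi$; your explicit check that $\bar\Phi$ is well defined and bijective, and the two inclusions via $\Phi^{-1}$, simply spell out what the paper compresses into ``taking preimages under $\pi$.''
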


\begin{proof}
Let $\Phi\in\mathrm{Aut}(W)$.
By Proposition~\ref{prop:induced_on_quotient}, $\Phi$ induces an automorphism
\[
\bar\Phi\in\mathrm{Aut}(G).
\]
Since $P_\sigma$ is characteristic in $G$, we have $\bar\Phi(P_\sigma)=P_\sigma$.
Taking preimages under $\pi$ yields $\Phi(PW)=PW$.
\end{proof}

We now turn to intrinsic rigidity properties of the permutational wreath pullback, showing that under condition $(\ast)$ the abelian kernel and the quotient group are determined by the group structure of $W$. 
Let $H$ be a finitely generated abelian group and write
\[
H \cong \mathbb{Z}^r \oplus T,
\]
where $r=\mathrm{rk}(H)$ and $T$ is finite.
Assume that $\sigma\colon G \to S_n$ is surjective and that $n\ge 3$.

Recall that for a group $X$, $\mathcal{A}(X)$ denotes the subgroup generated by all abelian normal subgroups of $X$, see Definition~\ref{def:ax}. 
As in the previous subsection, we consider condition~\textbf{($\ast$)}: Every abelian normal subgroup of $W$ is contained in $H^n$. 
Under this hypothesis, the subgroup $H^n$ is intrinsically determined by $W$ via $\mathcal{A}(W)$.
For $n,m \ge 5$, condition {$(\ast)$} may be verified using Theorem~\ref{teo:conditionA_equiv}.

\begin{prop}\label{prop:A_invariant}
If $\Phi\colon X \to Y$ is a group isomorphism, then
\[
\Phi(\mathcal{A}(X))=\mathcal{A}(Y).
\]
\end{prop}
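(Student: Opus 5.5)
The plan is to show that an isomorphism carries the generating family of $\mathcal{A}(X)$ bijectively onto the generating family of $\mathcal{A}(Y)$. The result then follows because isomorphisms preserve subgroups generated by a family.

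\textbf{Step 1.} Take $A\le X$ abelian and normal. Then $\Phi(A)$ is a subgroup of $Y$, and it is abelian because $\Phi$ is a homomorphism. It is also normal in $Y$. Indeed, every $y\in Y$ has the form $y=\Phi(x)$, since $\Phi$ is surjective, and then
\[
y\Phi(A)y^{-1}=\Phi(xAx^{-1})=\Phi(A).
\]
So $\Phi$ sends abelian normal subgroups of $X$ to abelian normal subgroups of $Y$. Applying the same argument to $\Phi^{-1}$ shows that every abelian normal subgroup $B$ of $Y$ arises as $\Phi(\Phi^{-1}(B))$, where $\Phi^{-1}(B)$ is abelian and normal in $X$. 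Hence $A\mapsto\Phi(A)$ is a bijection between the two families.

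\textbf{Step 2.} For any family $\{A_i\}$ of subgroups, a homomorphism satisfies
\[
\Phi\bigl(\langle A_i\rangle\bigr)=\langle \Phi(A_i)\rangle.
\]
This holds because both sides consist of the images of finite products of elements of the $A_i$ and their inverses. Applying this to the family of all abelian normal subgroups of $X$, and using Step 1, gives $\Phi(\mathcal{A}(X))=\mathcal{A}(Y)$.

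There is no real obstacle in this argument. The only point needing care is that surjectivity of $\Phi$ is used to get normality of $\Phi(A)$ in all of $Y$. The inverse map is used to get the reverse inclusion $\mathcal{A}(Y)\subseteq\Phi(\mathcal{A}(X))$.
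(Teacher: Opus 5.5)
Your proof is correct and follows the same route as the paper: an isomorphism sends abelian normal subgroups to abelian normal subgroups, so the generating families correspond and the generated subgroups match. You simply spell out the details the paper leaves implicit: normality of $\Phi(A)$ via surjectivity, the reverse inclusion via $\Phi^{-1}$, and the fact that images commute with taking generated subgroups.
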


\begin{proof}
If $A\lhd X$ is abelian, then $\Phi(A)\lhd Y$ is abelian, and the result follows.
\end{proof}

The following result shows that, under condition {\rm($\ast$)}, the permutational wreath pullback decomposition is intrinsically determined.

\begin{teo}\label{teo:intrinsic_rigidity}
Let
\[
W = H^n \rtimes_\sigma G,
\qquad
W' = K^m \rtimes_{\sigma'} G',
\]
where $H,K$ are finitely generated abelian groups and $n,m\ge 3$.
Assume that both $W$ and $W'$ satisfy $(\ast)$.

If $\Phi\colon W \to W'$ is a group isomorphism, then:
\begin{multicols}{3}
    
\begin{enumerate}
\item $\Phi(H^n)=K^m$;
\item $H^n \cong K^m$;
\item $\mathrm{rk}(H)\, n = \mathrm{rk}(K)\, m$;
\item $T(H)^n \cong T(K)^m$;
\item $G \cong G'$.
\end{enumerate}
\end{multicols}
\end{teo}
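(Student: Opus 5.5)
The plan is to derive everything from the intrinsic description of the abelian kernel provided by Proposition~\ref{prop:A_of_W} together with the invariance statement Proposition~\ref{prop:A_invariant}. Since $W$ and $W'$ both satisfy $(\ast)$, Proposition~\ref{prop:A_of_W} gives $\mathcal{A}(W)=H^n$ and $\mathcal{A}(W')=K^m$. Applying Proposition~\ref{prop:A_invariant} to the isomorphism $\Phi$ yields
\[
\Phi(H^n)=\Phi(\mathcal{A}(W))=\mathcal{A}(W')=K^m,
\]
which is item (1). Restricting $\Phi$ to $H^n$ then gives an isomorphism $H^n\to K^m$, which is item (2).

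For items (3) and (4) I will use the structure theorem for finitely generated abelian groups. Writing $H\cong\mathbb{Z}^{r}\oplus T(H)$ and $K\cong\mathbb{Z}^{s}\oplus T(K)$, we obtain
\[
H^n\cong\mathbb{Z}^{rn}\oplus T(H)^n,
\qquad
K^m\cong\mathbb{Z}^{sm}\oplus T(K)^m.
\]
The torsion subgroup of $H^n$ is $T(H)^n$ and that of $K^m$ is $T(K)^m$. Any isomorphism of abelian groups carries torsion subgroups onto each other, so it induces $T(H)^n\cong T(K)^m$, which is item (4). It also induces an isomorphism of the torsion-free quotients $\mathbb{Z}^{rn}\cong\mathbb{Z}^{sm}$. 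Comparing ranks, for instance by tensoring with $\mathbb{Q}$, gives $rn=sm$, which is item (3).

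For item (5), the plan is to pass to quotients. Since $\Phi(H^n)=K^m$, the isomorphism $\Phi$ descends to an isomorphism $W/H^n\to W'/K^m$. The extension~\eqref{eq:extension} identifies $W/H^n\cong G$ and $W'/K^m\cong G'$, so $G\cong G'$.

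None of these steps seems to be a real obstacle. The one substantive input is item (1), and it rests entirely on $(\ast)$ through $\mathcal{A}$. The remaining care goes into item (3). There the torsion-free rank is an isomorphism invariant, but I will not claim that $n=m$ or that $H\cong K$, since the statement does not assert either.
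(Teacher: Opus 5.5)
Your proposal is correct and follows the paper's proof essentially step by step. You identify $H^n=\mathcal{A}(W)$ and $K^m=\mathcal{A}(W')$ via Proposition~\ref{prop:A_of_W}, transport along $\Phi$ with Proposition~\ref{prop:A_invariant}, read off ranks and torsion from the structure theorem, and pass to the quotients $W/H^n\cong G$ and $W'/K^m\cong G'$; your use of torsion subgroups and rationalization only makes explicit what the paper leaves implicit for items (3) and (4).
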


\begin{proof}
From Proposition~\ref{prop:A_of_W} applied to $W$ and $W'$, we obtain
\[
\mathcal{A}(W)=H^n,
\qquad
\mathcal{A}(W')=K^m.
\]
Since $\Phi$ is an isomorphism, Proposition~\ref{prop:A_invariant} implies
\[
\Phi(H^n)=\Phi(\mathcal{A}(W))=\mathcal{A}(W')=K^m.
\]
Hence $H^n\cong K^m$.

Writing
\[
H^n \cong \mathbb{Z}^{rn}\oplus T(H)^n,
\qquad
K^m \cong \mathbb{Z}^{sm}\oplus T(K)^m,
\]
where $r=\mathrm{rk}(H)$ and $s=\mathrm{rk}(K)$, we obtain
\[
rn=sm
\]
and
\[
T(H)^n \cong T(K)^m.
\]

Finally, since
\[
W/H^n \cong G,
\qquad
W'/K^m \cong G',
\]
and $\Phi(H^n)=K^m$, the isomorphism $\Phi$ induces an isomorphism
\[
G \cong G'.
\]
\end{proof}

As a direct consequence, in the special case where $H=\mathbb{Z}$, the above rigidity result simplifies considerably and yields the following.

\begin{coro}\label{coro:intrinsic_rigidity_Z}
Let
\[
W=\mathbb{Z}^n\rtimes_\sigma G,
\qquad
W'=\mathbb{Z}^m\rtimes_{\sigma'} G',
\]
with $n,m\ge 3$, both satisfying $(\ast)$.
If $W\cong W'$, then
\[
n=m
\qquad\text{and}\qquad
G\cong G'.
\]
\end{coro}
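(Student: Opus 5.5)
This is a direct specialization of Theorem~\ref{teo:intrinsic_rigidity} to $H=K=\mathbb{Z}$, so the plan is to invoke that theorem and read off its conclusions.

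First I will check the hypotheses. Here $H=K=\mathbb{Z}$ are finitely generated abelian, $n,m\ge 3$, and both $W$ and $W'$ satisfy $(\ast)$ by assumption. Hence Theorem~\ref{teo:intrinsic_rigidity} applies to any isomorphism $\Phi\colon W\to W'$.

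Second, I will extract $n=m$. Item (3) of the theorem gives $\mathrm{rk}(\mathbb{Z})\,n=\mathrm{rk}(\mathbb{Z})\,m$, and $\mathrm{rk}(\mathbb{Z})=1$, so $n=m$. Equivalently, item (2) gives $\mathbb{Z}^n\cong\mathbb{Z}^m$, and the rank of a free abelian group is an isomorphism invariant (for instance via $\mathbb{Z}^n\otimes\mathbb{Q}\cong\mathbb{Q}^n$).

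Third, item (5) gives $G\cong G'$ directly. It arises as the map induced by $\Phi$ on $W/\mathcal{A}(W)=W/\mathbb{Z}^n\to W'/\mathbb{Z}^m$.

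There is no real obstacle. The only point needing care is that the identification $\mathcal{A}(W)=\mathbb{Z}^n$ depends on $(\ast)$ through Proposition~\ref{prop:A_of_W}, and this hypothesis is assumed for both groups.
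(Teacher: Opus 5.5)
Your proposal is correct and follows the paper's proof exactly: the paper also obtains this corollary by specializing Theorem~\ref{teo:intrinsic_rigidity} to $H=K=\mathbb{Z}$, reading off $n=m$ from the rank equality (since $\mathrm{rk}(\mathbb{Z})=1$) and $G\cong G'$ from item (5). Your remark that the identification $\mathcal{A}(W)=\mathbb{Z}^n$ rests on $(\ast)$ via Proposition~\ref{prop:A_of_W} correctly isolates the only hypothesis doing real work.
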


\begin{proof}
This follows immediately from Theorem~\ref{teo:intrinsic_rigidity} by taking $H=K=\mathbb{Z}$.
\end{proof}


\section{Pullback interpretation and functoriality}\label{sec:pullback}

In this section we provide a conceptual interpretation of the permutational wreath pullback and explain its naturality from the viewpoint of group extensions and base change.

\subsection{Groups over the symmetric group and the pullback description}

Let $n\ge 2$ and let $S_n$ denote the symmetric group on $\{1,\dots,n\}$.

\begin{defi}
The category $\mathbf{Grp}_{/S_n}$ of groups over $S_n$ is defined as follows:
\begin{itemize}
\item Objects are pairs $(G,\sigma)$, where $\sigma\colon G \to S_n$ is a group homomorphism.

\item A morphism $f\colon (G_1,\sigma_1) \to (G_2,\sigma_2)$ is a group homomorphism $f\colon G_1 \to G_2$ such that
\[
\sigma_2 \circ f = \sigma_1.
\]
\end{itemize}
\end{defi}

Thus $\mathbf{Grp}_{/S_n}$ can be viewed as the category of groups equipped with a homomorphism to $S_n$, with morphisms preserving these maps.

We now recall that the classical wreath product
\[
H \wr S_n = H^n \rtimes S_n
\]
fits into the canonical short exact sequence
\begin{equation}\label{eq:wreath_extension}
1 \longrightarrow H^n 
\longrightarrow H \wr S_n 
\stackrel{\pi}{\longrightarrow} S_n 
\longrightarrow 1,
\end{equation}
where $\pi(\mathbf{h},\tau)=\tau$.

We make this interpretation precise by showing that the permutational wreath pullback $H \wr_\sigma G$ is obtained from \eqref{eq:wreath_extension} by base change along $\sigma$.

\begin{teo}\label{teo:pullback}
Let $\sigma\colon G \to S_n$ be a homomorphism. Then there is an isomorphism of groups
\[
H \wr_\sigma G
\;\cong\;
G \times_{S_n} (H \wr S_n),
\]
where
\[
G \times_{S_n} (H \wr S_n)
=
\set{(g,(\mathbf{h},\tau)) \in G \times (H \wr S_n)}
{\sigma(g)=\tau}
\]
is the pullback (or fiber product) of $\pi\colon H \wr S_n \to S_n$ along $\sigma\colon G \to S_n$ in $\mathbf{Grp}$, that is, the subgroup of $G \times (H \wr S_n)$ consisting of pairs whose images in $S_n$ coincide.
\end{teo}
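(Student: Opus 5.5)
I will write down an explicit map and check that it is an isomorphism. Define
\[
\Psi\colon H\wr_\sigma G \longrightarrow G\times_{S_n}(H\wr S_n),
\qquad
(\mathbf{h},g)\longmapsto \bigl(g,(\mathbf{h},\sigma(g))\bigr).
\]
The image lies in the fibre product because the $S_n$-component of $(\mathbf{h},\sigma(g))$ is exactly $\sigma(g)$. The multiplication in $H\wr S_n$ will be written with the same convention as \eqref{eq:multiplication}: $(\mathbf{h},\tau)(\mathbf{k},\rho)=(\mathbf{h}\cdot\tau\mathbf{k},\tau\rho)$, where $\tau\mathbf{k}=(k_{\tau(1)},\dots,k_{\tau(n)})$. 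With this convention, the action of $g\in G$ on $H^n$ in the definition of $H\wr_\sigma G$ is by construction the action of $\sigma(g)$, so $g\mathbf{k}=\sigma(g)\mathbf{k}$.

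\textbf{Homomorphism.} I will compute both sides:
\[
\Psi\bigl((\mathbf{h},g)(\mathbf{k},x)\bigr)=\bigl(gx,(\mathbf{h}\cdot g\mathbf{k},\sigma(gx))\bigr),
\]
\[
\Psi(\mathbf{h},g)\Psi(\mathbf{k},x)=\bigl(gx,(\mathbf{h}\cdot\sigma(g)\mathbf{k},\sigma(g)\sigma(x))\bigr).
\]
These agree because $g\mathbf{k}=\sigma(g)\mathbf{k}$ and $\sigma$ is a homomorphism.

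\textbf{Bijectivity.} Injectivity holds because the $G$-coordinate and the $H^n$-coordinate of $(\mathbf{h},g)$ are both read off from $\Psi(\mathbf{h},g)$. For surjectivity, take $(g,(\mathbf{h},\tau))$ in the fibre product. Then $\tau=\sigma(g)$, so it equals $\Psi(\mathbf{h},g)$. The explicit inverse is $(g,(\mathbf{h},\tau))\mapsto(\mathbf{h},g)$.

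\textbf{Compatibility and main obstacle.} I will also note that $\Psi$ commutes with the projections to $G$ and restricts to the identity on $H^n$. Hence it is an isomorphism of extensions between \eqref{eq:extension} and the pullback of \eqref{eq:wreath_extension}. This is what justifies calling it base change. Surjectivity of $\sigma$ is not needed anywhere. The only real obstacle is the convention check: the permutation action $(h_{\tau(1)},\dots,h_{\tau(n)})$ is a right action, so I must make sure the product in $H\wr S_n$ is written with the same convention as in $H\wr_\sigma G$. I will fix the product in $H\wr S_n$ by the very formula that defines $H\wr_\sigma G$, taking $G=S_n$ and $\sigma=\mathrm{id}$. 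Then the identity $\sigma(g)\mathbf{k}=g\mathbf{k}$ holds tautologically, and associativity, whatever convention is used, is inherited simultaneously by both groups.
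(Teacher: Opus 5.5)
Your proposal is correct and follows essentially the same route as the paper: both use the map $(\mathbf{h},g)\mapsto\bigl(g,(\mathbf{h},\sigma(g))\bigr)$ with inverse $(g,(\mathbf{h},\tau))\mapsto(\mathbf{h},g)$. You additionally write out the homomorphism computation, observe that surjectivity of $\sigma$ is not needed, and address the left/right action convention by defining the product on $H\wr S_n$ with the same formula; the paper leaves all three points implicit.
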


\begin{proof}
Define
\[
\Phi \colon H \wr_\sigma G \longrightarrow 
G \times_{S_n} (H \wr S_n)
\]
by
\[
\Phi(\mathbf{h},g)
=
\bigl(g,(\mathbf{h},\sigma(g))\bigr).
\]

Since $\pi(\mathbf{h},\sigma(g))=\sigma(g)$, the pair lies in the pullback. The map $\Phi$ is a homomorphism, since multiplication in both semidirect products is defined using the same permutational action via $\sigma(g)\in S_n$.

It is clearly bijective, with inverse
\[
(g,(\mathbf{h},\sigma(g)))
\longmapsto
(\mathbf{h},g).
\]
Thus $H \wr_\sigma G$ is isomorphic to the fiber product.
\end{proof}

\begin{obs}
Theorem~\ref{teo:pullback} shows that $H \wr_\sigma G$ is not an ad hoc construction, but arises naturally from the canonical permutational extension \eqref{eq:wreath_extension} by base change along $\sigma$.

In particular, many structural properties of $H \wr_\sigma G$ can be understood as inherited from the classical wreath product via this pullback construction. Moreover, the construction depends functorially on the morphism $\sigma\colon G \to S_n$.
\end{obs}

\subsection{Functoriality in $G$ and $H$}

We now turn to the functorial behavior of the construction regarding both the base group $G$ and the coefficient group $H$. We begin with functoriality in the base group $G$.

\begin{prop}\label{prop:functorial_G}
Let $(G_1,\sigma_1)$ and $(G_2,\sigma_2)$ be objects of $\mathbf{Grp}_{/S_n}$ and let $f\colon G_1 \to G_2$ be a morphism in $\mathbf{Grp}_{/S_n}$. Then $f$ induces a natural homomorphism
\[
H \wr_{\sigma_1} G_1 
\longrightarrow 
H \wr_{\sigma_2} G_2
\]
given by
\[
(\mathbf{h},g) \longmapsto (\mathbf{h},f(g)).
\]
\end{prop}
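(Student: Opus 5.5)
The plan is to verify directly that the map $F(\mathbf{h},g)=(\mathbf{h},f(g))$ respects the multiplication rule \eqref{eq:multiplication}, in the same way as the proof of Proposition~\ref{prop:dependence_sigma} but without needing bijectivity. The key observation is that the action of $g\in G_1$ on $H^n$ depends only on the permutation $\sigma_1(g)$. Because $f$ is a morphism in $\mathbf{Grp}_{/S_n}$, we have $\sigma_2(f(g))=\sigma_1(g)$, so $f(g)$ acts on $H^n$ exactly as $g$ does: $f(g)\mathbf{k}=g\mathbf{k}$ for all $\mathbf{k}\in H^n$.

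With this identity the computation is immediate. We have
\[
F\bigl((\mathbf{h},g)(\mathbf{k},x)\bigr)=F(\mathbf{h}\cdot g\mathbf{k},gx)=(\mathbf{h}\cdot g\mathbf{k},f(g)f(x)),
\]
while
\[
F(\mathbf{h},g)F(\mathbf{k},x)=(\mathbf{h}\cdot f(g)\mathbf{k},f(g)f(x)).
\]
These agree by the identity above, so $F$ is a homomorphism.

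For naturality, I will also check functoriality. The identity of $(G,\sigma)$ induces the identity of $H\wr_\sigma G$. If $f'\colon (G_2,\sigma_2)\to(G_3,\sigma_3)$ is a second morphism, then the map induced by the composite $f'\circ f$ is the composite of the induced maps, since both send $(\mathbf{h},g)$ to $(\mathbf{h},f'(f(g)))$. In addition, $F$ restricts to the identity on $H^n$ and covers $f$ on the quotients. This means $F$ is a morphism of the extensions \eqref{eq:extension}, i.e.\ the evident diagram commutes.

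As a consistency check, I can also view the construction through Theorem~\ref{teo:pullback}. Under that isomorphism, $F$ corresponds to $(g,(\mathbf{h},\tau))\mapsto (f(g),(\mathbf{h},\tau))$, which is the map of fiber products induced by $f$ together with the identity on $H\wr S_n$. That map is well defined precisely because $\sigma_2\circ f=\sigma_1$.

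There is no real obstacle here. The only point requiring care is confirming that the action is defined through $\sigma$, so that the compatibility condition on $f$ is exactly what is needed to match the two actions.
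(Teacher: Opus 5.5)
Your proof is correct and follows the paper's argument: the compatibility $\sigma_2\circ f=\sigma_1$ makes $f(g)$ act on $H^n$ exactly as $g$ does, and the multiplication rule then gives the homomorphism property. Your explicit computation and the extra functoriality and pullback-compatibility checks simply spell out details the paper leaves implicit.
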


\begin{proof}
The compatibility condition $\sigma_2\circ f=\sigma_1$ ensures that $f(g)$ acts on $H^n$ in the same way as $g$. 
Thus the map preserves multiplication and is a well-defined homomorphism.
\end{proof}

We next consider functoriality with respect to the coefficient group $H$.

\begin{prop}\label{prop:functorial_H}
Let $\varphi\colon H \to K$ be a group homomorphism. Then $\varphi$ induces a natural homomorphism
\[
H \wr_\sigma G 
\longrightarrow 
K \wr_\sigma G
\]
given by
\[
(\mathbf{h},g)
\longmapsto
(\varphi(h_1),\dots,\varphi(h_n),g).
\]
\end{prop}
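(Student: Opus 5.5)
We define $\Psi\colon H\wr_\sigma G\to K\wr_\sigma G$ by $\Psi(\mathbf{h},g)=(\varphi^n(\mathbf{h}),g)$, where $\varphi^n\colon H^n\to K^n$ is the coordinatewise map $(h_1,\dots,h_n)\mapsto(\varphi(h_1),\dots,\varphi(h_n))$. Clearly $\varphi^n$ is a homomorphism, so the plan is to check that $\Psi$ respects the multiplication \eqref{eq:multiplication} and then address naturality.

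The key observation is that $\varphi^n$ is $G$-equivariant. For $g\in G$ and $\mathbf{k}\in H^n$ we have
\[
\varphi^n(g\mathbf{k})=\bigl(\varphi(k_{\sigma(g)(1)}),\dots,\varphi(k_{\sigma(g)(n)})\bigr)=g\,\varphi^n(\mathbf{k}),
\]
because both the action on $H^n$ and the action on $K^n$ permute coordinates by the same permutation $\sigma(g)$. Using this, we compute
\[
\Psi\bigl((\mathbf{h},g)(\mathbf{k},x)\bigr)=\bigl(\varphi^n(\mathbf{h})\,\varphi^n(g\mathbf{k}),gx\bigr)=\bigl(\varphi^n(\mathbf{h})\cdot g\varphi^n(\mathbf{k}),gx\bigr)=\Psi(\mathbf{h},g)\,\Psi(\mathbf{k},x).
\]
Hence $\Psi$ is a homomorphism.

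For naturality we check functoriality directly: the identity of $H$ induces the identity, and $(\psi\circ\varphi)^n=\psi^n\circ\varphi^n$. We also check compatibility with the morphisms of Proposition~\ref{prop:functorial_G}. Those act only on the $G$-coordinate, whereas $\Psi$ acts only on the $H^n$-coordinate, so the two families commute. Finally, $\Psi$ is compatible with the extensions \eqref{eq:extension}: it restricts to $\varphi^n$ on the kernels and induces the identity on the quotient $G$.

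There is no real obstacle. The only point needing care is the equivariance identity, and this holds because $\varphi$ is applied coordinatewise while the action only reindexes coordinates. As an alternative, we could derive the result from Theorem~\ref{teo:pullback}: the map $H\wr S_n\to K\wr S_n$ given by $(\mathbf{h},\tau)\mapsto(\varphi^n(\mathbf{h}),\tau)$ is a homomorphism over $S_n$, and a map over $S_n$ induces a map on the fiber products.
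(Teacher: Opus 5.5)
Your proof is correct and follows the paper's own argument: the paper likewise notes that the coordinatewise map $H^n\to K^n$ is $G$-equivariant because the action only permutes coordinates, and hence extends to a homomorphism of semidirect products. Your explicit multiplication check, the naturality remarks, and the alternative via Theorem~\ref{teo:pullback} just spell out details that the paper leaves implicit.
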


\begin{proof}
The map $H^n \to K^n$ induced by $\varphi$ is $G$-equivariant, since the action of $G$ permutes coordinates. Hence it extends to a homomorphism of semidirect products.
\end{proof}

Propositions~\ref{prop:functorial_G} and \ref{prop:functorial_H} show that the construction
\[
(H,\sigma\colon G\to S_n) \longmapsto H \wr_\sigma G
\]
is functorial in the coefficient group $H$ and in the pair $(G,\sigma)$. This functorial perspective explains the uniform behavior observed in the structural results of Section~\ref{sec:structure}, and provides a conceptual framework for the braid-type applications developed in the next section.

 
\section{Applications to braid-type groups}\label{sec:braid}

The general framework developed in the previous sections yields, as concrete applications,
a systematic study of framed braid-type groups. 
To the best of our knowledge, even in the classical case, the algebraic structure of framed braid groups—such as their centers, abelianizations, and related invariants—had not been investigated in a unified or conceptual way. 

Moreover, for several families of braid-type groups, including virtual and singular braid groups, no general definition of framed analogues was previously available in the literature. 
The permutational wreath pullback, defined in Section~\ref{sec:wreath}, provides a natural and flexible construction that simultaneously defines these groups and allows for a uniform analysis of their structure. 
We illustrate this by deriving explicit structural results for classical, surface, virtual, and singular framed braid groups. Throughout, we take $H=\mathbb{Z}$ unless otherwise stated. 

We now obtain explicit descriptions of the algebraic structure of framed braid groups in several settings. 
These results follow directly from the general theory developed in this paper, and appear to be new even in classical cases.

\subsection{Classical and surface braid groups}

We begin with the classical case, which serves as the motivating example for the general construction introduced in this paper.
Let $B_n$ denote the classical Artin braid group with its canonical surjective homomorphism
\[
\sigma \colon B_n \to S_n,
\]
sending each standard generator $\sigma_i$ to the transposition $(i,i+1)$.

\begin{figure}[h]
\centering
\includegraphics[width=0.42\textwidth]{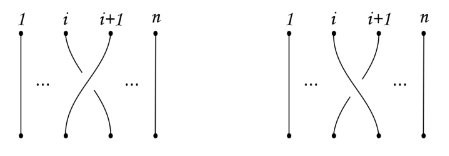}
\caption{The classical Artin generator $\sigma_i$ and its induced permutation in $S_n$.}
\label{fig:classical_generator}
\end{figure}

\begin{obs}
The geometric crossing represented by $\sigma_i$ induces precisely the transposition $(i,i+1)$ on the set of strands. This permutation determines the action of $B_n$ on $\mathbb{Z}^n$ in the semidirect product $\mathbb{Z}^n \rtimes_\sigma B_n$.
\end{obs}

\begin{defi}
The classical framed braid group on $n$ strands is
\[
FB_n := \mathbb{Z}^n \rtimes_\sigma B_n.
\]
\end{defi}

This shows that the classical framed braid group $FB_n$ fits naturally into the framework of permutational wreath pullbacks $\mathbb{Z}\wr_\sigma B_n$. 
We now apply the general structural results of Section~\ref{sec:structure}.

\begin{prop}\label{coro:FBn_structural}
Let $n\ge 3$. Then:
\begin{enumerate}
\item $Z(FP_{n}) = \mathbb{Z}^{n} \times \langle \Delta_{n}^{2} \rangle \cong \mathbb{Z}^{n+1}$, where
\[
\Delta_{n} = (\sigma_{1}\sigma_{2}\cdots\sigma_{n-1})(\sigma_{1}\sigma_{2}\cdots\sigma_{n-2})\cdots(\sigma_{1}\sigma_{2})\sigma_{1} \in B_{n}
\]
is the full twist.

\item $Z(FB_{n})=\mathbb{Z}[\theta]\times \langle\Delta_{n}^{2}\rangle\cong\mathbb{Z}\times \mathbb{Z}$, where $\theta=t_{1}\cdots t_{n}$.

\item For $n = 3$, we get
\[
FB_{3} / Z(FB_{3}) \cong \mathbb{Z}^{2} \rtimes PSL_{2}(\mathbb{Z}).
\]
\item $(FB_n)^{ab}\cong \mathbb{Z}\times \mathbb{Z}$.
\end{enumerate}
\end{prop}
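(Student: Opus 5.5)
Each item follows from the general results of Section~\ref{sec:structure} applied with $H=\mathbb{Z}$, $G=B_n$, and $\sigma$ the canonical epimorphism, together with the standard facts on $B_n$: $Z(B_n)=Z(P_n)=\langle\Delta_n^2\rangle$ for $n\ge 3$, $B_n^{ab}\cong\mathbb{Z}$, and $B_3/Z(B_3)\cong PSL_2(\mathbb{Z})$. Here $FP_n=\pi^{-1}(P_n)$ is the pure framed braid group.

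\textbf{Item (1).} By Proposition~\ref{prop:pure_splits}, $FP_n\cong \mathbb{Z}^n\times P_n$. The center of a direct product is the product of the centers, so
\[
Z(FP_n)=\mathbb{Z}^n\times Z(P_n)=\mathbb{Z}^n\times\langle\Delta_n^2\rangle\cong\mathbb{Z}^{n+1},
\]
since $\Delta_n^2$ has infinite order.

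\textbf{Item (2).} Apply Theorem~\ref{teo:center}. Since $Z(B_n)=\langle\Delta_n^2\rangle\le P_n=\ker\sigma$, the ``in particular'' clause gives
\[
Z(FB_n)=\Delta(\mathbb{Z})\times\langle\Delta_n^2\rangle.
\]
Writing $t_i$ for the generators of $\mathbb{Z}^n$, the diagonal $\Delta(\mathbb{Z})$ is generated by $\theta=t_1\cdots t_n$. Hence $Z(FB_n)\cong\mathbb{Z}^2$.

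\textbf{Item (4).} Theorem~\ref{teo:abelianization} gives $(FB_n)^{ab}\cong\mathbb{Z}^{ab}\times B_n^{ab}\cong\mathbb{Z}\times\mathbb{Z}$.

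\textbf{Item (3).} This is the only step needing care. Put $N=\langle\theta\rangle\times\langle\Delta_3^2\rangle$. First, $\mathbb{Z}^3/\Delta(\mathbb{Z})\cong\mathbb{Z}^2$ via $(a,b,c)\mapsto(a-c,\,b-c)$, and the $S_3$-action descends to this quotient because the diagonal is invariant. Second, $\Delta_3^2$ is central and acts trivially on $\mathbb{Z}^3$.

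Consider the composite $\mathbb{Z}^3\rtimes B_3\to(\mathbb{Z}^3/\Delta)\rtimes(B_3/\langle\Delta_3^2\rangle)$. It is well defined because $B_3/\langle\Delta_3^2\rangle$ still acts through $S_3$; note that $\langle\Delta_3^2\rangle\le P_3$, so $\sigma$ factors through $PSL_2(\mathbb{Z})\cong B_3/\langle\Delta_3^2\rangle$. The map is surjective. Its kernel consists of the pairs $(\mathbf{h},g)$ with $\mathbf{h}\in\Delta(\mathbb{Z})$ and $g\in\langle\Delta_3^2\rangle$, which is exactly $N$. This gives
\[
FB_3/Z(FB_3)\cong\mathbb{Z}^2\rtimes PSL_2(\mathbb{Z}),
\]
with the action induced from the permutation action through $PSL_2(\mathbb{Z})\to S_3$.

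The main point to check is this kernel computation: that quotienting a semidirect product by an invariant subgroup of the kernel times a normal subgroup of the base acting trivially yields the semidirect product of the quotients. It is routine once the compatibility of the actions noted above is verified.
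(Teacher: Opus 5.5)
Your proposal is correct. For items (2) and (4) it is exactly the paper's argument: Theorem~\ref{teo:center} with $Z(B_n)=\langle\Delta_n^2\rangle\le P_n$, and Theorem~\ref{teo:abelianization}. The differences are in (1) and (3).

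For (1), the paper attributes the result to Theorem~\ref{teo:center}. You instead use the splitting $FP_n\cong\mathbb{Z}^n\times P_n$ of Proposition~\ref{prop:pure_splits} and take centers factorwise. Your route is the one that actually applies. Theorem~\ref{teo:center} assumes the map to $S_n$ is surjective, but its restriction to $P_n$ is trivial. Formally plugging $G=P_n$ into that formula would give only $\Delta(\mathbb{Z})\times Z(P_n)\cong\mathbb{Z}^2$, not $\mathbb{Z}^{n+1}$.

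For (3), the paper says only that the description ``follows from the semidirect product structure.'' You make this precise by writing down the surjection $\mathbb{Z}^3\rtimes B_3\to(\mathbb{Z}^3/\Delta(\mathbb{Z}))\rtimes(B_3/\langle\Delta_3^2\rangle)$. It is a homomorphism because $\langle\Delta_3^2\rangle\le P_3$ acts trivially and $\Delta(\mathbb{Z})$ is $S_3$-invariant. You then identify its kernel with $Z(FB_3)$ from item (2). This also makes explicit that $PSL_2(\mathbb{Z})$ acts on $\mathbb{Z}^2\cong\mathbb{Z}^3/\Delta(\mathbb{Z})$ through $PSL_2(\mathbb{Z})\to S_3$.

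The paper's version is shorter. Yours supplies the verifications it leaves implicit and avoids the inapplicable citation in item (1).
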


\begin{proof}
Items (1), (2), and (4) follow from Theorems~\ref{teo:center} and~\ref{teo:abelianization}, applied to $H=\mathbb{Z}$ and $G=B_n$, together with the classical descriptions of $Z(B_n)$, $Z(P_n)$ and $B_n^{ab}$. 

For \(n=3\), since \(B_3/Z(B_3)\cong PSL_2(\mathbb Z)\) and \(Z(FB_3)=\Delta(\mathbb Z)\times Z(B_3)\), the quotient description follows from the semidirect product structure.
\end{proof}

\begin{figure}[h]
\centering
\includegraphics[width=0.75\textwidth]{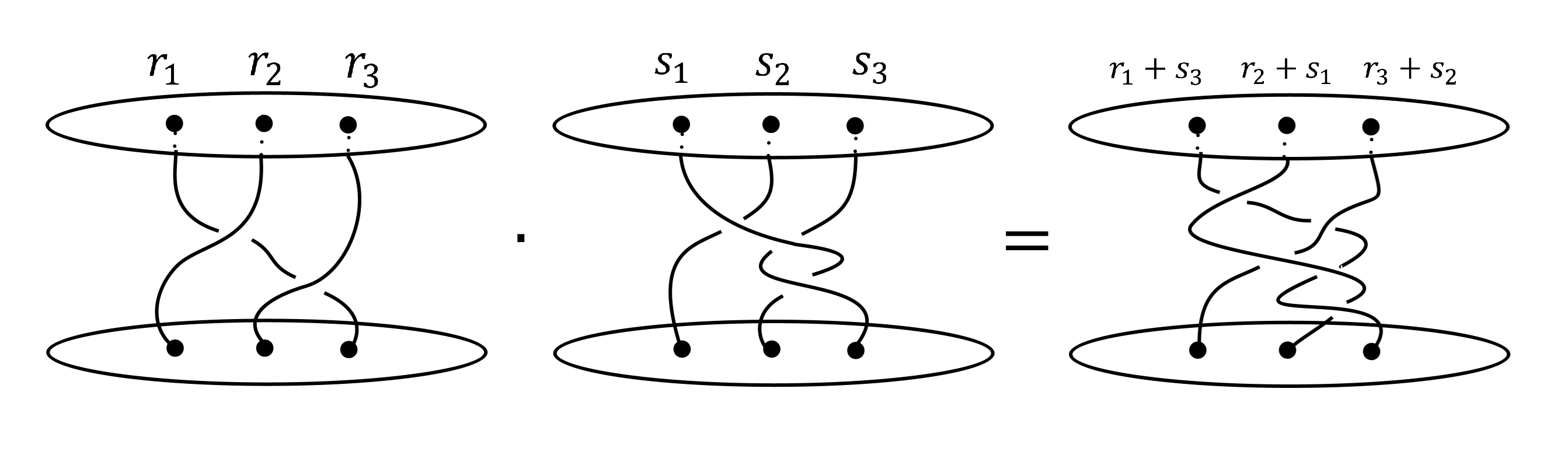}
\caption{Framing interpretation: the $\mathbb{Z}^n$--coordinates record integer twisting data along strands.}
\label{fig:framing_visual}
\end{figure}

\begin{obs}
The additional $\mathbb{Z}^n$ component encodes integer twisting along each strand. 
The permutation action induced by $\sigma$ rearranges these integer coordinates exactly as strands are permuted.
\end{obs}

The same construction extends naturally to braid groups on surfaces. 
Let $M$ be a connected surface and let $B_n(M)$ denote the braid group on the surface $M$. For details about surface braid groups we refer the reader to the references \cite{FvB, FaN, FoN, GPi}. 
There is a canonical surjective homomorphism
\[
\sigma\colon B_n(M)\to S_n
\]
obtained by retaining only the induced permutation of strands.

\begin{obs}
Surface braid generators move strands around handles or punctures. While the geometric motion may be more intricate than in the classical case, the induced permutation on strands still defines a natural surjection to $S_n$, which governs the semidirect product structure.
\end{obs}

\begin{defi}
The framed surface braid group is
\[
FB_n(M):=\mathbb{Z}^n \rtimes_\sigma B_n(M).
\]
\end{defi}

Equivalently, $FB_n(M)=\mathbb{Z}\wr_\sigma B_n(M)$.

\begin{figure}[h]
\centering
\includegraphics[width=0.48\textwidth]{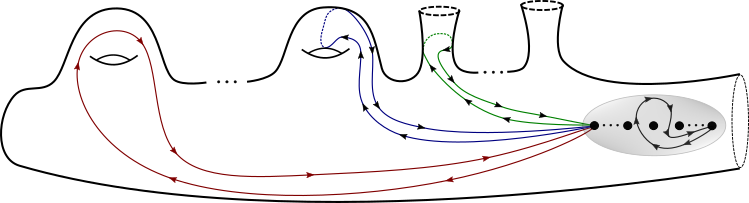}
\caption{Example of a braid in a punctured orientable surface.}
\label{fig:surface_generators}
\end{figure}

Thus, framed surface braid groups arise as permutational wreath pullbacks, and the results of Section~\ref{sec:structure} apply verbatim. 
In particular, whenever structural information on $B_n(M)$ or $P_n(M)$ is available,
our general results immediately yield corresponding information for $FB_n(M)$ and its pure subgroup.

 As in \cite{PR}, a compact surface $M$ will be called \emph{large} if it is different from
\begin{multicols}{3}
\begin{itemize}
	\item the sphere,
	\item the projective plane,
	\item the disk,
	\item the annulus,
	\item the torus,
	\item the Möbius strip, or
	\item the Klein bottle.
\end{itemize}
\end{multicols}
We shall call these seven surfaces \emph{non-large surfaces}. 
Recall that when $M=D^2$ (the disk)  then $B_n(D^2)$ (resp.\ $P_n(D^2)$) is the classical Artin braid group denoted by $B_n$ (resp.\ the classical pure Artin braid group denoted by $P_n$). 
The center of the framed braid group was considered in Proposition~\ref{coro:FBn_structural}. Now, we show a result about the center of framed surface braid groups $FB_n(M)$, for any large surface $M$.

\begin{prop}
Let $M$ be a compact large surface and let $\theta=t_1\cdots t_n\in\mathbb Z^n$.
If $n\ge 2$, then
\[
Z(FP_n(M))\cong \mathbb Z^n,
\qquad
Z(FB_n(M))\cong \mathbb Z.
\]
More precisely,
\[
Z(FP_n(M))=\mathbb Z^n,
\qquad
Z(FB_n(M))=\langle\theta\rangle.
\]
\end{prop}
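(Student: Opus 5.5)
The plan is to apply Theorem~\ref{teo:center} twice, with $H=\mathbb{Z}$, combined with the known fact that braid groups of large surfaces have trivial center. The input from surface braid theory is the classical result (Paris--Rolfsen \cite{PR}; see also \cite{GPi}): for a compact large surface $M$ and $n\ge 2$,
\[
Z(B_n(M))=1 \qquad\text{and}\qquad Z(P_n(M))=1 .
\]
We will quote this rather than reprove it. The main risk in the argument lies here: we must make sure the cited statement covers every large surface, orientable or not and with or without boundary, and every $n\ge 2$, for both the full and the pure braid groups.

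For $FB_n(M)=\mathbb{Z}\wr_\sigma B_n(M)$, Theorem~\ref{teo:center} with $H=\mathbb{Z}$ gives
\[
Z(FB_n(M))=\Delta(\mathbb{Z})\times\bigl(Z(B_n(M))\cap P_n(M)\bigr)=\Delta(\mathbb{Z})\times 1 .
\]
Written multiplicatively in the generators $t_1,\dots,t_n$ of $\mathbb{Z}^n$, the diagonal $\Delta(\mathbb{Z})$ is generated by $\theta=t_1\cdots t_n$. Hence $Z(FB_n(M))=\langle\theta\rangle\cong\mathbb{Z}$.

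For the pure framed group, Theorem~\ref{teo:center} does not apply directly, since the projection $P_n(M)\to S_n$ is trivial rather than surjective. Instead we use Proposition~\ref{prop:pure_splits}. The subgroup $FP_n(M)=\pi^{-1}(P_n(M))$ is the direct product $\mathbb{Z}^n\times P_n(M)$, so its center is
\[
Z(\mathbb{Z}^n)\times Z(P_n(M))=\mathbb{Z}^n\times 1=\mathbb{Z}^n .
\]
This agrees with the pattern of Proposition~\ref{coro:FBn_structural}(1), with the term $\langle\Delta_n^2\rangle$ now absent. Beyond locating the correct citation for the vanishing centers, every step is a direct substitution into results already proved in the paper.
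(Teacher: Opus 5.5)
Your proposal is correct and follows the paper's argument: apply Theorem~\ref{teo:center} with $H=\mathbb{Z}$, and use the Paris--Rolfsen result (their Proposition~1.6) that $Z(B_n(M))$ and $Z(P_n(M))$ are trivial for large $M$. Your use of the splitting $FP_n(M)\cong\mathbb{Z}^n\times P_n(M)$ from Proposition~\ref{prop:pure_splits} for the pure case supplies the step the paper's one-line proof leaves implicit, since Theorem~\ref{teo:center} is stated only for a surjection onto $S_n$.
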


\begin{proof}
This is an immediate consequence of Theorem~\ref{teo:center} together with the description of the centers of braid groups of large surfaces, see \cite[Proposition~1.6]{PR}.
\end{proof}

\begin{obs}

It is also possible to obtain explicit descriptions of the center of $FB_n(M)$ when $M$ is a non-large surface by combining Theorem~\ref{teo:center} with the known descriptions of the centers of the corresponding surface braid groups. 
Since these depend on the specific topology of the surface and require a case-by-case analysis, we omit them here. 
For instance, if $M=\mathbb S^2$ and $n\ge 3$, it was proved in \cite{GvB} that
\[
Z\bigl(B_n(\mathbb S^2)\bigr)\cong \mathbb Z_2,
\]
generated by $\Delta_n^2$. Hence, by Theorem~\ref{teo:center}, we obtain
\[
Z\bigl(FB_n(\mathbb S^2)\bigr)
=
\langle \theta\rangle \times \langle \Delta_n^2\rangle
\cong
\mathbb Z \times \mathbb Z_2,
\]
where $\theta=t_1\cdots t_n$.
\end{obs}

\subsubsection{Relation with surface framed braids}

The permutational wreath pullback construction is closely related to the framed braid groups over surfaces introduced by Bellingeri and Gervais~\cite{BG}, although the two approaches arise from different perspectives.

Let $M$ be a compact surface and consider the classical short exact sequence
\[
1 \longrightarrow P_n(M) \longrightarrow B_n(M) \stackrel{\sigma}{\longrightarrow} S_n \longrightarrow 1.
\]
In~\cite{BG}, framed braid groups over $M$ are defined using geometric methods, involving configuration spaces, tangent bundle considerations, and, in some formulations, mapping class group techniques. As a consequence, the resulting groups may depend on geometric features of the surface, such as orientability or the triviality of the tangent bundle, and the construction is not uniform across all surfaces.

In many cases, however, their construction yields a semidirect product of the form
\[
\mathbb{Z}^n \rtimes B_n(M),
\]
where the action is induced by permutation of strands. In these situations, the resulting group is abstractly isomorphic to the permutational wreath pullback
\[
\mathbb{Z} \wr_\sigma B_n(M) = \mathbb{Z}^n \rtimes_\sigma B_n(M).
\]

The key difference is conceptual: the approach of~\cite{BG} is geometric and surface-dependent, whereas the present construction is purely algebraic and depends only on the permutation representation $\sigma\colon G\to S_n$. Thus, the permutational wreath pullback provides a uniform framework that applies to any group equipped with such a representation, independently of any underlying geometric structure.

\subsection{Virtual and singular braid groups}\label{subsec:vbn}

We next consider virtual braid groups, which provide a broader combinatorial setting. Let $VB_n$ denote the virtual braid group introduced by Kauffman \cite{Kau}. 
The virtual braid group admits two natural surjective homomorphisms onto $S_n$ (see \cite[Section~2]{BB}, \cite[Section~2]{BP}):

\begin{itemize}
    \item $\pi_K\colon VB_n \to S_n$ defined by $\pi_K(\sigma_i)=1$ and $\pi_K(v_i)=s_i$, where $s_i=(i,i+1)$;
    \item $\pi_P\colon VB_n \to S_n$ defined by $\pi_P(\sigma_i)=\pi_P(v_i)=s_i$.
\end{itemize}

Their kernels are denoted by
\[
KB_n = \ker{\pi_K},\qquad VP_n = \ker{\pi_P}.
\]
The map $\iota\colon S_n\to VB_n$ given by $\iota(s_i)=v_i$ is a common section for both epimorphisms, yielding the semidirect product decompositions
\[
VB_n = KB_n \rtimes S_n = VP_n \rtimes S_n.
\]

\begin{figure}[h]
\centering
\begin{minipage}{0.45\textwidth}
    \centering
    \includegraphics[width=0.6\textwidth]{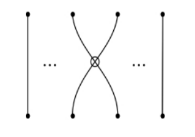}
    \caption{Virtual braid generator.}
    \label{fig:virtual_generator}
\end{minipage}
\hfill
\begin{minipage}{0.45\textwidth}
    \centering
    \includegraphics[width=0.5\textwidth]{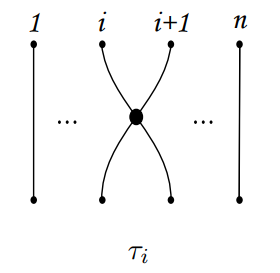}
    \caption{Singular braid generator.}
    \label{fig:singular_generator}
\end{minipage}
\end{figure}

\begin{obs}
Although virtual crossings have a distinct diagrammatic nature, their projection under $\pi_P$ still yields the same symmetric group action, hence the same permutational mechanism governs framings.
\end{obs}

\begin{defi}
The framed virtual braid group is defined as 
\[
FVB_n:=\mathbb{Z}^n \rtimes_\sigma VB_n.
\]
\end{defi}

As in the classical case, the canonical projection to $S_n$ induces the permutational action on $\mathbb{Z}^n$. 
Therefore, all structural and rigidity results obtained in Section~\ref{sec:structure} apply to $FVB_n$.

Thus $FVB_n=\mathbb{Z}\wr_\sigma VB_n$, and
\[
(FVB_n)^{ab}\cong \mathbb{Z}\times VB_n^{ab}.
\]

Finally, we consider singular braid groups, which introduce additional generators while preserving the underlying permutation structure. 
Let $SG_n$ denote the singular braid group \cite{FKR}. There is a natural surjective homomorphism
\[
\sigma\colon SG_n\to S_n
\]
induced by the underlying permutation of strands.

\begin{obs}
Singular crossings introduce additional generators but do not alter the underlying permutation representation, so the framing construction remains identical.
\end{obs}

\begin{defi}
The framed singular braid group is
\[
FSG_n:=\mathbb{Z}^n \rtimes_\sigma SG_n.
\]
\end{defi}
Once again, the permutational nature of the action ensures that the framed singular braid group fits into our general framework.

\begin{prop}
Let $n\geq 3$. Then
\begin{enumerate}

\item $(FVB_n)^{ab}\cong \mathbb Z\times VB_n^{ab}$. 

\item $(FSG_n)^{ab}\cong \mathbb Z\times SG_n^{ab}$.

    \item $Z(FVP_n) = \mathbb{Z}^n = \ang{\{t_1, t_2, \ldots, t_n \}}$,
    \item $Z(FVB_n) = \mathbb{Z} = \ang{t_1\cdot t_2 \cdots t_n }$.

    \item $Z(FSP_{n}) = \mathbb{Z}^{n} \times \langle \Delta_{n}^{2} \rangle \cong \mathbb{Z}^{n+1}$, where $\Delta_{n} = (\sigma_{1}\sigma_{2}\cdots\sigma_{n-1})(\sigma_{1}\sigma_{2}\cdots\sigma_{n-2})\cdots(\sigma_{1}\sigma_{2})\sigma_{1} \in B_{n}$ is the full twist.

\item $Z(FSG_{n})=\mathbb{Z}[\theta]\times \langle\Delta_{n}^{2}\rangle\cong\mathbb{Z}\times \mathbb{Z}$, where $\theta=t_{1}\cdots t_{n}$.
    \end{enumerate}
\end{prop}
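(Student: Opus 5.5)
The plan is to read off every item from Theorems~\ref{teo:center} and~\ref{teo:abelianization} and Proposition~\ref{prop:pure_splits}, with $H=\mathbb{Z}$, together with the known centers and abelianizations of the virtual and singular braid groups. Throughout, $FVB_n$ is taken with respect to the epimorphism $\pi_P\colon VB_n\to S_n$, whose kernel is $VP_n$. For $SG_n$ the permutation map $\sigma$ has kernel the singular pure braid group $SP_n$. Following the notation of Proposition~\ref{coro:FBn_structural}, the framed pure groups are $FVP_n:=\pi^{-1}(VP_n)$ and $FSP_n:=\pi^{-1}(SP_n)$.

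For items (1) and (2) I apply Theorem~\ref{teo:abelianization} with $H=\mathbb{Z}$. Since $H^{ab}=\mathbb{Z}$, this gives $(FVB_n)^{ab}\cong \mathbb{Z}\times VB_n^{ab}$ and $(FSG_n)^{ab}\cong\mathbb{Z}\times SG_n^{ab}$ directly. If explicit values are wanted, one can add $VB_n^{ab}\cong\mathbb{Z}\times\mathbb{Z}_2$ and $SG_n^{ab}\cong\mathbb{Z}^2$, but the statement does not require them.

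For the pure groups, items (3) and (5), I use Proposition~\ref{prop:pure_splits}, which gives $FVP_n\cong\mathbb{Z}^n\times VP_n$ and $FSP_n\cong \mathbb{Z}^n\times SP_n$. The center of a direct product is the product of the centers, so
\[
Z(FVP_n)=\mathbb{Z}^n\times Z(VP_n),\qquad Z(FSP_n)=\mathbb{Z}^n\times Z(SP_n).
\]
This is also what Theorem~\ref{teo:center} gives when applied formally with trivial action. It remains to insert the known facts $Z(VP_n)=1$ for $n\ge 3$ (Bardakov--Bellingeri) and $Z(SP_n)=\langle\Delta_n^2\rangle\cong\mathbb{Z}$ (Fenn--Rolfsen--Zhu, via the embedding of singular braids as in \cite{FKR}). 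Since $\Delta_n^2$ has infinite order, this yields $\mathbb{Z}^n$, generated by $t_1,\dots,t_n$, and $\mathbb{Z}^{n+1}$, respectively.

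For the full groups, items (4) and (6), I apply Theorem~\ref{teo:center}. The term $\Delta(Z(\mathbb{Z}))$ is $\langle \theta\rangle$ with $\theta=t_1\cdots t_n$, so the remaining task is to compute $Z(G)\cap\ker\sigma$. For $VB_n$ with $n\ge 3$ the center is trivial, which gives $Z(FVB_n)=\langle\theta\rangle\cong\mathbb{Z}$. For $SG_n$ the center is $\langle\Delta_n^2\rangle$, and $\Delta_n^2$ is a pure braid, so $Z(SG_n)\subseteq SP_n$. The ``in particular'' clause of Theorem~\ref{teo:center} then gives $Z(FSG_n)=\langle\theta\rangle\times\langle\Delta_n^2\rangle\cong\mathbb{Z}^2$.

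The only real obstacle is external: pinning down correct references for the four centers $Z(VB_n)$, $Z(VP_n)$, $Z(SG_n)$ and $Z(SP_n)$ in the range $n\ge 3$. The algebra on the wreath-pullback side is immediate from the earlier results.
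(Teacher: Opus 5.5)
Your reduction is the same as the paper's. You use Theorem~\ref{teo:abelianization} for (1)--(2) and Theorem~\ref{teo:center} for (4) and (6), and you take the centers of $VB_n$, $VP_n$, $SG_n$, $SP_n$ as external input; the paper cites \cite[Theorems~6 and~11]{DN} for the virtual groups and \cite{V2} for the singular ones.

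Routing (3) and (5) through Proposition~\ref{prop:pure_splits} is the right move. The permutation map is trivial on the pure subgroup, so Theorem~\ref{teo:center}, which needs surjectivity, does not apply there. Your aside that Theorem~\ref{teo:center} ``applied formally with trivial action'' gives the same answer is false, however. With trivial action the formula would give $\Delta(Z(H))\times Z(G)$, i.e.\ $\langle\theta\rangle\times Z(VP_n)$, not $\mathbb{Z}^n\times Z(VP_n)$. Surjectivity is exactly what forces the $H^n$-component of a central element to be diagonal (Lemma~\ref{lem:fixed_points}). Delete that sentence.

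The real weak point is your justification of the singular inputs. Fenn--Rolfsen--Zhu compute the center of the singular braid \emph{monoid} $SB_n$. The embedding $SB_n\hookrightarrow SG_n$ of \cite{FKR} does not transfer that result to the group. It gives no control over central elements of $SG_n$ lying outside $SB_n$. It also does not determine $Z(SP_n)$, which is the center of a different group whose central elements need only commute with pure singular braids. Items (5) and (6) need the group-level facts $Z(SP_n)=\langle\Delta_n^2\rangle$ and $Z(SG_n)=\langle\Delta_n^2\rangle$, which the paper takes from \cite{V2}. With those inputs, together with $Z(VB_n)=Z(VP_n)=1$ for $n\ge 3$, your argument is complete.
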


\begin{proof}
It follows from Theorem~\ref{teo:abelianization}, Theorem~\ref{teo:center} and \cite[Theorems~6 and~11]{DN} for virtual braid groups and from \cite{V2} for singular braid groups. 
\end{proof}

We also may obtain similar results for the virtual twin group $VT_n$ (see \cite[Section~2]{NNS}, \cite[Section~2]{NNS2}), that is a planar analogue of the virtual braid group. 
It admits two natural surjective homomorphisms onto $S_n$:
\begin{itemize}
    \item $\sigma\colon VT_n \to S_n$ defined by $\sigma(s_i)=\sigma(\rho_i)=s_i$;
    \item $\theta\colon VT_n \to S_n$ defined by $\theta(s_i)=1$ and $\theta(\rho_i)=s_i$.
\end{itemize}

Their kernels are denoted by
\[
PVT_n = \ker{\sigma},\qquad KT_n = \ker{\theta}.
\]
The map $\iota\colon S_n\to VT_n$ given by $\iota(s_i)=\rho_i$ is a common section, giving the semidirect product decompositions
\[
VT_n = PVT_n \rtimes S_n = KT_n \rtimes S_n.
\]

\subsection{Braid-type examples satisfying condition $(\ast\ast)$}\label{subsec:ast_ast}

We now verify condition $(\ast\ast)$ for the kernels introduced in Section~\ref{subsec:vbn}, for virtual braid and virtual twin groups. 
The relevant epimorphisms are $\pi_K\colon VB_n\to S_n$, $\sigma\colon VT_n\to S_n$, and $\theta\colon VT_n\to S_n$, with kernels $KB_n$, $PVT_n$, and $KT_n$, respectively. 

The structural properties of these kernels are well understood:
\begin{itemize}
    \item $KB_n$ is an Artin group \cite[Proposition~17]{BB}, \cite[Proposition~3.1]{BP};
    \item $PVT_n$ is a right-angled Artin group (RAAG) \cite[Theorem~3.3]{NNS};
    \item $KT_n$ is a right-angled Coxeter group \cite[Theorem~3.3]{NNS2}.
\end{itemize}
In each case, the action of $S_n$ on the kernel is given by permutation of the indices of the generators, and is faithful.

We will verify condition $(\ast\ast)$ for the kernels $KB_n$, $PVT_n$ and $KT_n$ using different structural arguments. 
The verification for $KB_n$ relies on the amalgamated product structure of Artin groups; for $PVT_n$, we use the RAAG structure and Lemma~\ref{lem:raag_abelian_normal}; for $KT_n$, we use the classification of affine Coxeter groups and results on amenable normal subgroups.

\subsubsection{Technical lemmas on RAAGs}

We begin by establishing some technical lemmas about right-angled Artin groups, which will be used in the verification of condition $(\ast\ast)$ for virtual twin groups.

\begin{lema}\label{lem:raag_two_generator}
Let $G$ be a right-angled Artin group. If $a,x\in G$ satisfy $[a,x]\neq 1$, then the subgroup $\langle a,x\rangle$ is a non-abelian free group of rank $2$.
\end{lema}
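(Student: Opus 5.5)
We plan to derive Lemma~\ref{lem:raag_two_generator} from the classical theorem of Baudisch: in a right-angled Artin group, any two elements either commute or generate a free group of rank $2$. Below we sketch how that dichotomy can be established, or reduced to standard facts, in the form needed here.

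First, note that $\langle a,x\rangle$ is non-abelian, since $[a,x]\neq 1$. It is also $2$-generated, so its rank is at most $2$. The entire content of the lemma is therefore that $\langle a,x\rangle$ is free. Given freeness, a free group generated by two elements that do not commute is non-abelian, hence not cyclic, so it has rank exactly $2$ and $\{a,x\}$ is a free basis (free groups are Hopfian).

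For freeness, our main step is to invoke Baudisch's theorem directly. If we want a self-contained route, we would argue via the action of $G$ on the universal cover of its Salvetti complex, a CAT(0) cube complex on which $G$ acts freely. The steps, in order, are:
\begin{enumerate}
\item Reduce to cyclically reduced representatives by a simultaneous conjugation. This changes neither the isomorphism type of $\langle a,x\rangle$ nor commutation.
\item Use the centralizer description in RAAGs (Servatius): each element decomposes as a product of commuting pure factors $p_1^{k_1}\cdots p_r^{k_r}$, and two elements commute exactly when their pure factors are compatible in this sense.
\item Since $a$ and $x$ do not commute, some pure factor of $a$ fails to commute with some pure factor of $x$. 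Then apply a ping-pong argument to suitable powers, using disjoint ``attracting'' sets of normal forms.
\end{enumerate}

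The main obstacle is this last step. Ping-pong naturally shows that $\langle a^N,x^N\rangle$ is free for large $N$, not that $\langle a,x\rangle$ itself is free. Passing from the powers to the original elements is precisely the delicate part of Baudisch's proof. For this reason we would not reprove it. Instead we will cite Baudisch (Acta Math.\ Acad.\ Sci.\ Hungar., 1981), and possibly Carr's extension, for the dichotomy ``two elements of a RAAG commute or generate $F_2$''. The lemma then follows immediately from the first paragraph.
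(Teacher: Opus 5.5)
Your proposal is correct and is essentially the paper's proof: both rest on Baudisch's theorem that a two-generator subgroup of a semifree group (i.e.\ a right-angled Artin group) is free or free abelian, rule out the abelian case via $[a,x]\neq 1$, and get rank exactly $2$ from the two generators. The cube-complex sketch is unnecessary, and its first step is not valid as stated, since a single conjugation cannot in general make both $a$ and $x$ cyclically reduced; but you ultimately rely on the citation of Baudisch, so this does not affect the argument.
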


\begin{proof}
Baudisch proved that every subgroup generated by two elements of a semifree group is either free or free abelian \cite{Bau}. Since semifree groups coincide with right-angled Artin groups (see \cite[p.~2]{Charney}), it follows that $\langle a,x\rangle$ is either free abelian or free. As $a$ and $x$ do not commute, $\langle a,x\rangle$ is not abelian, hence it must be a free group. Since it is generated by two elements, it is a free group of rank $2$.
\end{proof}

\begin{lema}\label{lem:free_group_centralizer}
Let $F$ be a non-abelian free group and let $1\neq a\in F$. Then:
\begin{enumerate}
\item the centralizer $C_F(a)$ is an infinite cyclic group;
\item the normalizer of the cyclic subgroup $\langle a\rangle$ coincides with $C_F(a)$.
\end{enumerate}
\end{lema}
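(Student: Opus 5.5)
The plan is to use the standard normal form theory for free groups: reduced words, cyclically reduced conjugates, and the uniqueness of roots.

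For (1), write $a = u^k$ with $u\in F$ not a proper power and $k\neq 0$. Such a root exists and is unique up to inversion: after conjugating, $a$ is cyclically reduced, and the length of a cyclically reduced word strictly grows with its exponent.

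The key fact to establish is that commuting elements of a free group generate a cyclic subgroup. I would prove this via Nielsen--Schreier. If $[a,x]=1$, then $\langle a,x\rangle$ is a free subgroup that is abelian. A free group with two or more free generators is non-abelian, so its rank is at most $1$, and hence $\langle a,x\rangle$ is cyclic.

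It follows that every $x\in C_F(a)$ lies in some cyclic subgroup $\langle w\rangle$ containing $a$. Then $a=w^m$, and maximality of $u$ gives $w=u^{\pm1}$. Hence $C_F(a)=\langle u\rangle\cong\mathbb{Z}$, which is infinite since $F$ is torsion-free.

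For (2), take $y\in N_F(\langle a\rangle)$, so $yay^{-1}=a^{j}$ for some $j$. Then $y$ normalizes $\langle a\rangle$, and $a$ and $a^j$ are conjugate. Conjugation preserves the length of a cyclically reduced form, and the cyclically reduced length of $a^j$ is $|j|$ times that of $a$. This forces $j=\pm1$.

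It remains to rule out $j=-1$. Suppose $yay^{-1}=a^{-1}$. Then $y^2$ commutes with $a$, so $y^2\in C_F(a)=\langle u\rangle$, say $y^2=u^{\ell}$. Also $y$ commutes with $y^2$, so $y$ and $u^{\ell}$ lie in a common cyclic subgroup.

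If $\ell\neq0$, the uniqueness of roots gives $y\in\langle u\rangle$. Then $y$ commutes with $a$, so $a=a^{-1}$, which is impossible in a torsion-free group.

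If $\ell=0$, then $y^2=1$, so $y=1$ by torsion-freeness, and again $a=a^{-1}$, a contradiction.

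Therefore $j=1$ and $y\in C_F(a)$. The reverse inclusion $C_F(a)\subseteq N_F(\langle a\rangle)$ is trivial.

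The main obstacle is the step showing that commuting elements generate a cyclic subgroup, together with the uniqueness of roots. I will cite Nielsen--Schreier (for instance from \cite{MKS}) for these rather than reprove them.
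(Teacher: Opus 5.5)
Your proposal is correct in substance, but it takes a different route from the paper. The paper does not prove this lemma at all; it quotes both parts as standard facts from \cite[Chapter~I, Section~4]{MKS}.

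You instead reduce everything to Nielsen--Schreier and uniqueness of roots. Part (1) comes from ``commuting elements of a free group generate a cyclic subgroup''. Part (2) comes from a separate argument: first $j=\pm1$, then $j=-1$ is excluded via $y^2\in C_F(a)$. For $j=\pm1$ the length argument is not needed: $y\langle a\rangle y^{-1}=\langle a\rangle$ forces $yay^{-1}$ to generate the infinite cyclic group $\langle a\rangle$. What your approach buys is an actual proof of (2), whose only non-obvious point is ruling out $yay^{-1}=a^{-1}$. This is the fact used later in Lemma~\ref{lem:raag_abelian_normal}, where it is applied to the generator $u$ of $C_F(a)$, with $C_F(u)=\langle u\rangle$. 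The citation buys only brevity.

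Two steps need repair.

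First, ``maximality of $u$ gives $w=u^{\pm1}$'' is false for the $w$ your key fact produces. If $a=u^4$ and $x=u^2$, then $\langle a,x\rangle=\langle u^{2}\rangle$, so $w=u^{\pm2}$. What you need, and what suffices, is $w\in\langle u\rangle$.

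Second, that conclusion is genuinely the uniqueness of roots, which your length argument does not give. The length argument only shows that some root of $a$ that is not a proper power exists. A short fix: if $u^k=w^m$ with $k,m\neq0$, then $u^k$ is a non-trivial central element of the free group $\langle u,w\rangle$. Non-abelian free groups have trivial center, so $\langle u,w\rangle=\langle z\rangle$ is cyclic. Since $u$ is not a proper power, $u=z^{\pm1}$, and hence $w\in\langle u\rangle$. The same fact justifies your step $y\in\langle u\rangle$ in the case $\ell\neq0$ of part (2).
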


\begin{proof}
These are standard facts about free groups; see, for instance, \cite[Chapter~I, Section~4]{MKS}.
\end{proof}

\begin{lema}\label{lem:raag_abelian_normal}
Let $G$ be a right-angled Artin group with trivial center. Then $G$ contains no non-trivial abelian normal subgroup.
\end{lema}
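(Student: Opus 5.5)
Let $A\lhd G$ be a non-trivial abelian normal subgroup of the right-angled Artin group $G$; we aim for a contradiction with $Z(G)=1$. Fix $1\neq a\in A$. Since $Z(G)=1$, the element $a$ is not central, so there is $x\in G$ with $[a,x]\neq 1$. By Lemma~\ref{lem:raag_two_generator}, the subgroup $F:=\langle a,x\rangle$ is a non-abelian free group of rank $2$. The strategy is to show that normality and commutativity of $A$ force an impossible relation inside $F$.

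Since $A$ is normal in $G$, the conjugate $xax^{-1}$ lies in $A$, and since $A$ is abelian, $xax^{-1}$ commutes with $a$. Both $a$ and $xax^{-1}$ lie in $F$, so $xax^{-1}\in C_F(a)$. By Lemma~\ref{lem:free_group_centralizer}(1), $C_F(a)$ is infinite cyclic, say $\langle c\rangle$, and $a=c^k$ with $k\neq 0$. Then $xax^{-1}=c^m$ for some $m\neq 0$.

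Next, I would argue that $x$ normalizes $C_F(a)$. Conjugation by $x$ sends $C_F(a)$ to $C_F(xax^{-1})$. Because $xax^{-1}$ and $a$ are non-trivial powers of the same element $c$ in the free group $F$, their centralizers coincide (in a free group, the centralizer of $c^j$ with $j\neq 0$ is the maximal cyclic subgroup containing $c$). Hence $x\langle c\rangle x^{-1}=\langle c\rangle$, and therefore $xcx^{-1}=c^{\pm1}$. Consequently $xax^{-1}=a^{\pm1}$, so $x$ normalizes $\langle a\rangle$. By Lemma~\ref{lem:free_group_centralizer}(2), this gives $x\in C_F(a)$, i.e.\ $[a,x]=1$, contradicting the choice of $x$. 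Hence $A=1$.

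I expect the main point of care to be the step showing that $xax^{-1}$ and $a$ being commuting elements of $F$ forces $x$ to normalize $\langle a\rangle$. The natural approach is the one above: use maximal cyclic subgroups and the fact that an automorphism (here conjugation by $x$ inside $F$) preserving a cyclic subgroup $\langle c\rangle$ acts on it by $c\mapsto c^{\pm1}$. A shortcut is available if one wants to avoid maximal cyclic subgroups: in a free group, $xc^kx^{-1}=c^m$ already forces $|k|=|m|$ by comparing cyclically reduced lengths, so $xax^{-1}=a^{\pm1}$ directly.
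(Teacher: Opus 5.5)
Your argument is correct and is essentially the paper's proof. Both pick $1\neq a\in A$ and $x$ with $[a,x]\neq 1$, pass to the free group $F=\langle a,x\rangle$ via Baudisch, and use $xax^{-1}\in C_F(a)$ to show that $x$ normalizes the infinite cyclic group $C_F(a)$, which forces $x\in C_F(a)$. The only difference is cosmetic: you go down to $xax^{-1}=a^{\pm1}$ so you can apply part (2) of Lemma~\ref{lem:free_group_centralizer} literally to $\langle a\rangle$, whereas the paper applies the normalizer statement directly to $C_F(a)$.
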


\begin{proof}
Let $A\lhd G$ be an abelian normal subgroup. Suppose that $A\neq \{1\}$, and choose $1\neq a\in A$.

Since $Z(G)=\{1\}$, there exists $x\in G$ such that $[a,x]\neq 1$. By Lemma~\ref{lem:raag_two_generator}, the subgroup
\[
F:=\langle a,x\rangle
\]
is a non-abelian free group.

Since $A$ is normal in $G$, we have
\[
xax^{-1}\in A.
\]
As $A$ is abelian, the elements $a$ and $xax^{-1}$ commute. Hence
\[
xax^{-1}\in C_F(a).
\]
Therefore
\[
C_F(xax^{-1})=C_F(a).
\]
On the other hand,
\[
C_F(xax^{-1})=x\,C_F(a)\,x^{-1}.
\]
Thus
\[
x\,C_F(a)\,x^{-1}=C_F(a),
\]
so $x$ normalizes $C_F(a)$.

By Lemma~\ref{lem:free_group_centralizer}, the group $C_F(a)$ is infinite cyclic, and its normalizer in $F$ coincides with $C_F(a)$. Hence
\[
x\in C_F(a),
\]
which implies $[a,x]=1$, a contradiction.

Therefore $A=\{1\}$.
\end{proof}

The previous lemmas will be used to control abelian normal subgroups in right-angled Artin groups with trivial center. This will allow us to treat the kernel $PVT_n$ in a uniform way within the framework of condition $(\ast\ast)$. 

\subsubsection{Verification of condition $(\ast\ast)$ and the $R_\infty$-property}

We now verify condition $(\ast\ast)$ for several families of braid-type groups, using different structural arguments in each case.

\begin{teo}\label{teo:double_star}
Let $n\ge 3$. Then condition $(\ast\ast)$ holds in each of the following cases:
\begin{enumerate}
\item $(VB_n,\pi_K)$;
\item $(VT_n,\sigma)$;
\item $(VT_n,\theta)$.
\end{enumerate}
Equivalently, the following statements hold:
\begin{enumerate}
\item $KB_n$ contains no non-trivial abelian normal subgroup of $VB_n$;
\item $PVT_n$ contains no non-trivial abelian normal subgroup of $VT_n$;
\item $KT_n$ contains no non-trivial abelian normal subgroup of $VT_n$.
\end{enumerate}
\end{teo}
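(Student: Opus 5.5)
The plan is to reduce each case to a statement about the kernel $K$ itself. If $A\le K$ is an abelian normal subgroup of the ambient group, then $A$ is in particular an abelian normal subgroup of $K$. So it suffices to show that the relevant kernel has no non-trivial abelian normal subgroup. Where this is not available intrinsically, I will use $S_n$-invariance: $A$ is normalized by the section $\iota(S_n)$, so I may assume $A$ is normal in $K$ and invariant under the permutation action on generators.

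\textbf{Case $(VT_n,\sigma)$.} $PVT_n$ is a RAAG by \cite[Theorem~3.3]{NNS}, so Lemma~\ref{lem:raag_abelian_normal} applies once $Z(PVT_n)=1$. For a RAAG, the center is generated by the vertices adjacent to every other vertex. I will check from the explicit defining graph of $PVT_n$ that no generator is central for $n\ge3$. The generators are indexed by ordered pairs $(i,j)$, and commutation occurs only for disjoint index pairs. A generator with index $(i,j)$ therefore fails to commute with any generator sharing an index, and such a generator always exists when $n\ge 3$.

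\textbf{Case $(VT_n,\theta)$.} $KT_n$ is a right-angled Coxeter group \cite[Theorem~3.3]{NNS2}. The plan is to show that it is neither finite nor virtually abelian, and then to decompose it as a free product with amalgamation, or to use irreducibility. Infinite irreducible non-affine Coxeter groups have no non-trivial amenable normal subgroups, hence no non-trivial abelian ones. Affine irreducible right-angled Coxeter groups are only $D_\infty$, which is excluded by counting generators. I expect $KT_n$ to be irreducible, meaning its defining graph's complement is connected. If it is not, I will handle the factors separately: an abelian normal subgroup projects to abelian normal subgroups of the factors, and $S_n$ permutes these factors, so the same argument applies factorwise.

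\textbf{Case $(VB_n,\pi_K)$.} $KB_n$ is an Artin group whose generators $x_{ij}$ ($i\ne j$) satisfy braid or commuting relations. I will write $KB_n$ as an amalgamated product $P*_C Q$ over a standard parabolic subgroup with $C$ proper in both factors. This works whenever two generators have no relation, i.e.\ the defining graph is not complete. For a non-degenerate amalgam, meaning one with index conditions beyond $[P:C]=[Q:C]=2$, the Bass–Serre tree has no invariant end or line. A normal abelian subgroup then either acts elliptically, and so lies in the kernel of the action on the tree (contained in $\bigcap_g gCg^{-1}$), or fixes an end/line, which is impossible. Iterating over different splittings shows that the intersection of conjugates of the parabolics is trivial. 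Parabolic subgroups intersect in parabolics, and the normal core of a proper parabolic is trivial.

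The main obstacle is the $KB_n$ case. It requires a careful choice of splitting, the check that the core is trivial, and the exclusion of the dihedral/line-stabilizing situation. I would try first to exploit $S_n$-invariance to move between several splittings, each determined by a non-edge, and intersect the resulting constraints.
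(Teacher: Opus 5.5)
Your proposal is correct. For $(VT_n,\sigma)$ and $(VT_n,\theta)$ it is the paper's argument:
\begin{itemize}
\item $PVT_n$ is a RAAG with trivial center, so Lemma~\ref{lem:raag_abelian_normal} applies;
\item $KT_n$ is an irreducible, infinite, non-affine right-angled Coxeter group, so de Cornulier--L\'ecureux applies.
\end{itemize}
The only difference is that you check trivial center and irreducibility by hand from the defining graphs, where the paper cites \cite{NNS,NNS2}. Both checks are fine for $n\ge 3$. The generators $\lambda_{i,j}$ of $PVT_n$ are indexed by $i<j$ rather than by ordered pairs, but this does not affect the argument. Since $KT_n$ is irreducible, your fallback for a reducible $KT_n$ is never needed.

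For $(VB_n,\pi_K)$ you use the same splittings as the paper but a different mechanism. The paper uses that $r_\beta$ interchanges the two factors of
\[
KB_n=A[\widehat{\Gamma}_{\mathcal{X}_\beta^+}]*_{A[\widehat{\Gamma}_{\mathcal{Y}_\beta}]}A[\widehat{\Gamma}_{\mathcal{X}_\beta^-}].
\]
It then invokes cited lemmas on abelian subgroups invariant under such an involution. You use only $N\lhd KB_n$, and argue as follows:
\begin{itemize}
\item the edge group has infinite index in both vertex groups, so the Bass--Serre tree is minimal and is not a line;
\item hence there is no invariant line or end, and an abelian normal subgroup must act trivially, so it lies in the edge group.
\end{itemize}
This route is self-contained and proves the stronger fact that $KB_n$ itself has no non-trivial abelian normal subgroup. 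The $S_n$-invariance you intend to exploit is therefore superfluous.

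One step should be corrected. Do not rely on ``the normal core of a proper parabolic is trivial''; it fails for reducible Artin groups, since $A_{S_1}$ is normal in $A_{S_1}\times A_{S_2}$. It is also unnecessary. Each $\delta_{i,j}$ has no relation with $\delta_{j,i}$, since $m_{\beta,-\beta}=\infty$ for every root. So take the splittings along the pairs $\{\delta_{i,j},\delta_{j,i}\}$. This gives $N\le\bigcap_\beta A[\widehat{\Gamma}_{\mathcal{Y}_\beta}]$. Van der Lek's intersection theorem for standard parabolics (\cite[Theorem~3.2]{BP} in the paper) then yields $A[\widehat{\Gamma}_{\emptyset}]=\{1\}$, with no conjugates involved.
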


\begin{proof}
Let $n\ge 3$. We treat each case separately. 
    \begin{enumerate}
        \item Let $N\triangleleft VB_n$ be an abelian subgroup with $N\subseteq KB_n$. 
Since $VB_n = KB_n \rtimes S_n$, normality of $N$ in $VB_n$ implies that $N$ is invariant under conjugation by $S_n$, and in particular, $N$ is invariant under each reflection $r_\beta\in S_n$ corresponding to a positive root $\beta$.

By \cite[Proposition~17]{BB} and \cite[Proposition~3.1]{BP}, the group $KB_n$ is an Artin group generated by
\[
\{\delta_{i,j} \mid 1\le i\neq j\le n\},
\]
and $S_n$ acts by permutation of indices:
\[
w\cdot \delta_{i,j} = \delta_{w(i),w(j)}.
\]

Let $\Gamma=A_{n-1}$ and let $\Phi^+[\Gamma]$ be the set of positive roots.
For each $\beta\in\Phi^+[\Gamma]$, define
\[
\mathcal{X}_\beta^+ = \Phi[\Gamma]\setminus\{-\beta\},\quad
\mathcal{X}_\beta^- = \Phi[\Gamma]\setminus\{\beta\},\quad
\mathcal{Y}_\beta = \Phi[\Gamma]\setminus\{\beta,-\beta\}.
\]
By \cite[Lemma~3.3]{BP}, we have
\[
KB_n = A[\widehat{\Gamma}_{\mathcal{X}_\beta^+}] *_{A[\widehat{\Gamma}_{\mathcal{Y}_\beta}]} A[\widehat{\Gamma}_{\mathcal{X}_\beta^-}],
\]
where the two factors are interchanged by the reflection $r_\beta$.

Since $N$ is abelian and invariant under $r_\beta$, the standard argument for Artin groups associated to Coxeter systems (see \cite[Lemma~3.9]{BPT}, which handles abelian subgroups invariant under an involution swapping the factors, or, using the fact that $N$ is abelian, \cite[Lemma~3.6]{BP}) shows that, for every $\beta\in\Phi^+[\Gamma]$, every element of $N$ lies in the intersection of the two factors, namely in $A[\widehat{\Gamma}_{\mathcal{Y}_\beta}]$. 
Hence
\[
N \subseteq \bigcap_{\beta\in\Phi^+[\Gamma]} A[\widehat{\Gamma}_{\mathcal{Y}_\beta}].
\]

The intersection of all $\mathcal{Y}_\beta$ is empty. 
Indeed, for any $\gamma\in\Phi[\Gamma]$, either $\gamma$ or $-\gamma$ is a positive root, and thus $\gamma$ is excluded from the corresponding $\mathcal{Y}_\beta$.
By \cite[Theorem~3.2]{BP},
\[
\bigcap_{\beta\in\Phi^+[\Gamma]} A[\widehat{\Gamma}_{\mathcal{Y}_\beta}]
= A[\widehat{\Gamma}_{\emptyset}] = \{1\}.
\]

Thus $N=\{1\}$.

        \item Let $N\triangleleft VT_n$ be an abelian subgroup such that $N\subseteq PVT_n$.

By \cite[Section~2]{NNS}, one has
\[
VT_n=PVT_n\rtimes S_n.
\]
Thus $N$ is normal in $PVT_n$. 
By \cite[Theorem~3.3]{NNS}, the group $PVT_n$ is a right-angled Artin group, and by \cite[Corollary~4.2]{NNS}, its center is trivial for $n\ge 3$. Therefore, Lemma~\ref{lem:raag_abelian_normal} applies to $PVT_n$ and yields
\[
N=\{1\}.
\]

        \item Let $N\triangleleft VT_n$ be an abelian subgroup such that $N\subseteq KT_n$.
Since $KT_n\le VT_n$ and $N$ is normal in $VT_n$, it follows that $N$ is also normal in $KT_n$.

By \cite[Theorem~3.3]{NNS2} and \cite[Corollary~3.4]{NNS2}, the group $KT_n$ is an irreducible right-angled Coxeter group of rank $n(n-1)$ and with trivial center. 
In particular, for $n\ge 3$, the rank of $KT_n$ is at least $6$. 
The classification of irreducible affine Coxeter groups shows that the only irreducible affine right-angled Coxeter group is the infinite dihedral group $D_\infty$ (see \cite[Section~2]{Dani}). Hence, it follows that $KT_n$ is not affine.

Therefore $KT_n$ is an infinite irreducible non-affine Coxeter group. 
It is a theorem of de Cornulier \cite[Corollary~1.2]{dC} and L\'ecureux \cite[Theorem~1.1]{Lec} that every infinite irreducible non-affine Coxeter group has no non-trivial amenable normal subgroup. 
Since every abelian group is amenable, it follows that $N=\{1\}$.
    \end{enumerate}
\end{proof}

Theorem~\ref{teo:double_star} provides a unified verification of condition $(\ast\ast)$ for several braid-type groups. Combined with the equivalence between conditions $(\ast)$ and $(\ast\ast)$ for $n\ge 5$, this yields the following corollaries.

\begin{coro}\label{coro:star_and_double_star}
Let $n\ge 5$. Then conditions $(\ast)$ and $(\ast\ast)$ hold in each of the following cases:
\begin{enumerate}
\item $(VB_n,\pi_K)$;
\item $(VT_n,\sigma)$;
\item $(VT_n,\theta)$.
\end{enumerate}
\end{coro}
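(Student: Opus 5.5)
The corollary is a direct combination of two results already established, so the plan is simply to chain them. First, for each of the three pairs $(VB_n,\pi_K)$, $(VT_n,\sigma)$ and $(VT_n,\theta)$, I invoke Theorem~\ref{teo:double_star}. Since $n\ge 5$ implies $n\ge 3$, that theorem gives condition $(\ast\ast)$: the kernel $KB_n$, $PVT_n$ or $KT_n$, respectively, contains no non-trivial abelian normal subgroup of the ambient group $VB_n$ or $VT_n$. Before doing so I will check that each of $\pi_K$, $\sigma$ and $\theta$ is surjective onto $S_n$. This holds because the images of the generators $v_i$, respectively $\rho_i$, are the transpositions $s_i$, which generate $S_n$. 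Surjectivity is the standing hypothesis of Section~\ref{sec:structure}, so the wreath pullbacks $W=H\wr_{\pi_K}VB_n$, $H\wr_\sigma VT_n$ and $H\wr_\theta VT_n$ fall within that framework.

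Second, with $n\ge 5$, Theorem~\ref{teo:conditionA_equiv} states that $(\ast)$ holds for $W=H\wr_\sigma G$ if and only if $(\ast\ast)$ holds for $(G,\sigma)$. Applying the direction $(\ast\ast)\Rightarrow(\ast)$ gives $(\ast)$ for each of the three pullbacks. This is true for any non-trivial coefficient group $H$, and in particular for $H=\mathbb{Z}$, the case relevant to the framed groups.

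The only point needing care is that the hypothesis $n\ge 5$ enters solely through Lemma~\ref{lem:abel_pr_kernel}. That lemma uses the fact that $S_n$ has no non-trivial abelian normal subgroup, which fails for $n=4$ because of the Klein four-group $V_4$. This explains why the corollary strengthens $n\ge 3$ to $n\ge 5$. Otherwise there is no real obstacle: the substantive work was done in Theorem~\ref{teo:double_star}.
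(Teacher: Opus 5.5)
Your proposal is correct and matches the paper's proof: Theorem~\ref{teo:double_star} gives $(\ast\ast)$ in all three cases for $n\ge 3$, and the implication $(\ast\ast)\Rightarrow(\ast)$ from Theorem~\ref{teo:conditionA_equiv}, which is valid for $n\ge 5$, then gives $(\ast)$. Your surjectivity check and your remark on where $n\ge 5$ enters are accurate. Note that $n=3$ also fails Lemma~\ref{lem:abel_pr_kernel}, because $A_3$ is an abelian normal subgroup of $S_3$, and not only $n=4$ through $V_4$.
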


\begin{proof}
By Theorem~\ref{teo:double_star}, condition $(\ast\ast)$ holds in each of the three cases.

Since $n\ge 5$, Theorem~\ref{teo:conditionA_equiv} applies, and shows that conditions $(\ast)$ and $(\ast\ast)$ are equivalent. Hence condition $(\ast)$ also holds in each case.
\end{proof}

As a further consequence, we obtain new families of groups with the $R_\infty$-property.

\begin{coro}\label{coro:R_infty_examples}
Let $H$ be a finitely generated abelian group and let $n\ge 5$. Then each of the following permutational wreath pullbacks has the $R_\infty$-property:
\begin{enumerate}
\item $H^n\rtimes_{\pi_K} VB_n$;
\item $H^n\rtimes_{\sigma} VT_n$;
\item $H^n\rtimes_{\theta} VT_n$.
\end{enumerate}
\end{coro}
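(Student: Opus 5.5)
The plan is to combine Corollary~\ref{coro:star_and_double_star} with Theorem~\ref{teo:R_infty_general}, once we know that the base groups $VB_n$ and $VT_n$ have the $R_\infty$-property. Fix one of the three pairs $(G,\sigma)\in\{(VB_n,\pi_K),(VT_n,\sigma),(VT_n,\theta)\}$ with $n\ge 5$. Put $W=H\wr_\sigma G=H^n\rtimes_\sigma G$.

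First, Corollary~\ref{coro:star_and_double_star} gives condition $(\ast)$ for $W$. This rests on Theorem~\ref{teo:double_star} and, since $n\ge 5$, on the equivalence of Theorem~\ref{teo:conditionA_equiv}. It gives $(\ast)$ for every coefficient group $H$, because $(\ast\ast)$ is a condition on $(G,\sigma)$ alone. Since $H$ is finitely generated abelian, $H^n$ is abelian and normal. Proposition~\ref{prop:A_of_W} and Theorem~\ref{teo:Hn_characteristic} then show that $H^n=\mathcal{A}(W)$ is characteristic in $W$, with quotient $W/H^n\cong G$.

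Second, Theorem~\ref{teo:R_infty_general} (that is, Lemma~\ref{lem:R_infty_extension} applied to $1\to H^n\to W\to G\to 1$) reduces the claim to showing that $G$ itself has the $R_\infty$-property. For $VB_n$ I would cite the known result that virtual braid groups have the $R_\infty$-property for $n\ge 2$ (Dekimpe--Gon\c{c}alves--Ocampo). For $VT_n$ I would cite the corresponding result for virtual twin groups, presumably in \cite{DGO1,DGO2,DGO3} or related work.

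If a direct citation for $VT_n$ is unavailable, the fallback is to reduce to the abelianization. Since $VT_n^{ab}$ is finite, that alone does not suffice, so the next attempt would use a characteristic subgroup or characteristic quotient. For instance, one could show that $PVT_n$ or $KT_n$ is characteristic in $VT_n$ and use the induced action to produce infinitely many Reidemeister classes. This step, securing $R_\infty$ for the base groups $VB_n$ and $VT_n$, is the main obstacle, since it is external input. Everything else is a formal combination of results already established in the paper.
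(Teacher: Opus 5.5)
Your argument is the paper's proof. It takes condition $(\ast)$ from Corollary~\ref{coro:star_and_double_star} and then applies Theorem~\ref{teo:R_infty_general}, via Lemma~\ref{lem:R_infty_extension}, to lift the $R_\infty$-property from $VB_n$ and $VT_n$; the paper cites \cite{DGO2} for the $R_\infty$-property of both base groups, so your fallback via characteristic subgroups of $VT_n$ is not needed.
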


\begin{proof}
By Corollary~\ref{coro:star_and_double_star}, condition $(\ast)$ holds in each of the three cases.

Moreover, both $VB_n$ and $VT_n$ have the $R_\infty$-property by \cite{DGO2}. Therefore, in each case, Theorem~\ref{teo:R_infty_general} implies that the corresponding permutational wreath pullback has the $R_\infty$-property.
\end{proof}

\begin{obs}
We conclude this subsection by discussing a natural remaining case.
It is natural to ask whether condition $(\ast\ast)$ also holds for the kernel
\[
VP_n = \ker{\pi_P}
\]
of the canonical epimorphism $\pi_P\colon VB_n \to S_n$.

Although $VP_n$ shares several structural features with the groups considered above, such as a presentation indexed by pairs of strands and a natural action of $S_n$, the methods used in Theorem~\ref{teo:double_star} do not readily extend to this case. In particular, a suitable analogue of the parabolic decomposition used for $KB_n$, or of the RAAG and Coxeter arguments used for $PVT_n$ and $KT_n$, is not currently available in a form that would allow us to conclude.

We therefore leave the verification of condition $(\ast\ast)$ for $VP_n$ as an interesting open problem.
\end{obs}

\subsection{Sections of forgetful maps}

In this subsection we study the behavior of forgetful maps at the level of the subgroup lying over the permutational kernel. 
This is the natural setting for braid-type applications, where such maps arise on pure braid groups through Fadell--Neuwirth type constructions.

Let
\[
\sigma_{n+m}\colon G_{n+m}\to S_{n+m},
\qquad
\sigma_n\colon G_n\to S_n
\]
be surjective homomorphisms, and set
\[
P_{\sigma_{n+m}}:=\ker{\sigma_{n+m}},
\qquad
P_{\sigma_n}:=\ker{\sigma_n}.
\]
Let
\[
f\colon P_{\sigma_{n+m}}\longrightarrow P_{\sigma_n}
\]
be a surjective homomorphism, and let
\[
\psi_m\colon H^{n+m}\longrightarrow H^n
\]
be the projection onto the first \(n\) factors. We assume that $f$ is induced by a forgetful-type construction on the underlying braid-type groups.

For \(k=n,n+m\), consider the associated subgroups
\[
PW_k=\Pi_k^{-1}(P_{\sigma_k})\le H^k\rtimes_{\sigma_k} G_k,
\]
where \(\Pi_k\colon H^k\rtimes_{\sigma_k} G_k\to G_k\) denotes the canonical projection.
By Proposition~\ref{prop:pure_splits}, we have natural identifications
\[
PW_{n+m}\cong H^{n+m}\times P_{\sigma_{n+m}},
\qquad
PW_n\cong H^n\times P_{\sigma_n}.
\]

Via these identifications, the homomorphism $f$ naturally induces
\[
(\psi_m,f)\colon PW_{n+m}\longrightarrow PW_n,
\qquad
(\mathbf{h},g)\longmapsto (\psi_m(\mathbf{h}),f(g)).
\]

\begin{prop}\label{prop:sections_forgetful}
The homomorphism
\[
(\psi_m,f)\colon PW_{n+m}\longrightarrow PW_n
\]
admits a section if and only if the homomorphism 
\[
f\colon P_{\sigma_{n+m}}\longrightarrow P_{\sigma_n}
\]
admits a section.
\end{prop}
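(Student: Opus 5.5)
The plan is to work entirely through the identifications of Proposition~\ref{prop:pure_splits}. Under them $PW_{n+m}=H^{n+m}\times P_{\sigma_{n+m}}$ and $PW_n=H^n\times P_{\sigma_n}$ are genuine direct products, and the map $(\psi_m,f)$ is the product of the two homomorphisms $\psi_m\colon H^{n+m}\to H^n$ and $f\colon P_{\sigma_{n+m}}\to P_{\sigma_n}$. The statement then reduces to an elementary fact about products of homomorphisms, given that $\psi_m$ always has a section.

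For the direction ``$f$ has a section $\Rightarrow$ $(\psi_m,f)$ has a section'', let $s\colon P_{\sigma_n}\to P_{\sigma_{n+m}}$ satisfy $f\circ s=\mathrm{id}$. Let $\iota\colon H^n\to H^{n+m}$ be the inclusion $\mathbf{h}\mapsto(\mathbf{h},1,\dots,1)$, which is a homomorphism with $\psi_m\circ\iota=\mathrm{id}_{H^n}$. Define
\[
S(\mathbf{h},g)=(\iota(\mathbf{h}),s(g)).
\]
This is a homomorphism because both factors are homomorphisms and the groups are direct products. Also $(\psi_m,f)\circ S=\mathrm{id}$, checked coordinatewise.

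For the converse, suppose $S\colon PW_n\to PW_{n+m}$ is a homomorphism with $(\psi_m,f)\circ S=\mathrm{id}$. Let $j\colon P_{\sigma_n}\to PW_n$ be $g\mapsto(1,g)$, and let $p\colon PW_{n+m}\to P_{\sigma_{n+m}}$ be the projection $(\mathbf{h},g)\mapsto g$. Both are homomorphisms precisely because the products are direct, which is where Proposition~\ref{prop:pure_splits} is essential. Set $s:=p\circ S\circ j$. Writing $S(1,g)=(\mathbf{k},x)$, the section property gives $(\psi_m(\mathbf{k}),f(x))=(1,g)$. Hence $f(s(g))=f(x)=g$, so $s$ is a section of $f$.

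The only point needing care is that the projections and inclusions used are homomorphisms. In the ambient semidirect products $H^k\rtimes_{\sigma_k}G_k$ the projection to $H^k$ is not a homomorphism. Restricting to $PW_k$, where the action is trivial, is what makes the argument work, and I would state this explicitly.

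I would also check that the map ``$(\psi_m,f)$'' is the product map under the identifications, namely $(\mathbf{h},g)\mapsto(\psi_m(\mathbf{h}),f(g))$ as defined, and that it is a homomorphism. This holds because both factors are homomorphisms on direct products. The hypothesis that $f$ comes from a forgetful construction plays no role beyond ensuring this setup.
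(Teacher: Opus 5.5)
Your proposal is correct and matches the paper's proof essentially step for step: you lift a section $s$ of $f$ to $(\mathbf{h},p)\mapsto(\mathbf{h},1,\dots,1,s(p))$, and conversely you recover $s=\mathrm{pr}_2\circ\widetilde{s}\circ\iota$ from a section $\widetilde{s}$ of $(\psi_m,f)$. Your explicit remark that these inclusions and projections are homomorphisms only because $P_{\sigma_k}$ acts trivially on $H^k$ is a useful clarification that the paper leaves implicit.
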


\begin{proof}
Assume first that \(f\) admits a section
\[
s\colon P_{\sigma_n}\longrightarrow P_{\sigma_{n+m}}.
\]
Define
\[
\widetilde{s}\colon PW_n\longrightarrow PW_{n+m}
\]
by
\[
\widetilde{s}(h_1,\dots,h_n,p)
=
(h_1,\dots,h_n,1,\dots,1,s(p)),
\]
where the last \(m\) entries in the \(H\)-component are equal to the identity element of \(H\).
Then
\[
(\psi_m,f)\circ \widetilde{s}=\mathrm{id}_{PW_n},
\]
so \((\psi_m,f)\) admits a section.

Conversely, suppose that
\[
\widetilde{s}\colon PW_n\longrightarrow PW_{n+m}
\]
is a section of \((\psi_m,f)\).
Let
\[
\iota\colon P_{\sigma_n}\longrightarrow PW_n,
\qquad
\iota(p)=(1,p),
\]
and let
\[
\mathrm{pr}_2\colon PW_{n+m}\cong H^{n+m}\times P_{\sigma_{n+m}}
\longrightarrow P_{\sigma_{n+m}}
\]
denote the projection onto the second factor.
Define
\[
s:=\mathrm{pr}_2\circ \widetilde{s}\circ \iota
\colon
P_{\sigma_n}\longrightarrow P_{\sigma_{n+m}}.
\]
Then \(s\) is a homomorphism, and
\[
f\circ s
=
\mathrm{pr}_2\circ (\psi_m,f)\circ \widetilde{s}\circ \iota
=
\mathrm{pr}_2\circ \iota
=
\mathrm{id}_{P_{\sigma_n}}.
\]
Hence \(f\) admits a section.
\end{proof}

The previous proposition shows that, at the level of the subgroup lying over the permutational kernel, the framing coordinates do not introduce new obstructions to the existence of sections.

\begin{coro}\label{coro:surface_sections}
Let \(M\) be a connected surface for which the Fadell--Neuwirth forgetful homomorphism
\[
p_\ast\colon P_{n+m}(M)\longrightarrow P_n(M)
\]
is defined. Let
\[
FB_{n+m}(M)=\mathbb{Z}^{n+m}\rtimes B_{n+m}(M),
\qquad
FB_n(M)=\mathbb{Z}^n\rtimes B_n(M),
\]
and let
\[
\Pi_{n+m}\colon FB_{n+m}(M)\to B_{n+m}(M),
\qquad
\Pi_n\colon FB_n(M)\to B_n(M)
\]
be the canonical projections. Set
\[
FP_{n+m}(M):=\Pi_{n+m}^{-1}\bigl(P_{n+m}(M)\bigr),
\qquad
FP_n(M):=\Pi_n^{-1}\bigl(P_n(M)\bigr).
\]
Then the induced homomorphism
\[
(\psi_m,p_\ast)\colon FP_{n+m}(M)\longrightarrow FP_n(M)
\]
admits a section if and only if
\[
p_\ast\colon P_{n+m}(M)\longrightarrow P_n(M)
\]
admits a section.
\end{coro}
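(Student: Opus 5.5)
The plan is to obtain Corollary~\ref{coro:surface_sections} as a direct specialization of Proposition~\ref{prop:sections_forgetful}. We take $H=\mathbb{Z}$ and $G_k=B_k(M)$ for $k=n,n+m$. We take $\sigma_k\colon B_k(M)\to S_k$ to be the canonical surjection recording the induced permutation of strands, and $f=p_\ast$. First, we check that the hypotheses of that proposition are met. Surjectivity of $\sigma_k$ was already noted when $FB_k(M)$ was defined. Its kernel is $P_k(M)$, by the classical short exact sequence $1\to P_k(M)\to B_k(M)\to S_k\to 1$. Hence the subgroups $PW_k=\Pi_k^{-1}(P_{\sigma_k})$ are exactly $FP_k(M)$.

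Next, we need $f=p_\ast$ to be a surjective homomorphism $P_{n+m}(M)\to P_n(M)$ induced by forgetting the last $m$ strands. This is the content of the Fadell--Neuwirth fibration hypothesis: the forgetful map of configuration spaces $F_{n+m}(M)\to F_n(M)$ is a locally trivial fibration with path-connected fiber $F_m(M\setminus\{n \text{ points}\})$. The long exact sequence in homotopy then shows that $p_\ast$ is surjective on $\pi_1$. This is the only point where some input beyond the algebra of the paper is needed, and it is where I expect the main (mild) obstacle to lie. Depending on conventions, one should either take surjectivity as part of the phrase ``for which $p_\ast$ is defined'', or cite \cite{FaN} explicitly. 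Strictly, the argument of Proposition~\ref{prop:sections_forgetful} never uses surjectivity of $f$, since a section forces it anyway, so the hypothesis is harmless.

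Then, by Proposition~\ref{prop:pure_splits}, we have $FP_k(M)\cong\mathbb{Z}^k\times P_k(M)$, because pure braids act trivially on the framing coordinates. Under these identifications the induced map $(\psi_m,p_\ast)$ is $(\mathbf{h},g)\mapsto(\psi_m(\mathbf{h}),p_\ast(g))$. Here $\psi_m$ drops the last $m$ coordinates, matching the forgotten strands. One should remark that the strand labelling is chosen so that the forgotten strands are the last $m$; otherwise one composes with a coordinate permutation, which changes nothing.

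With everything in place, Proposition~\ref{prop:sections_forgetful} gives the equivalence immediately. Explicitly, a section $s$ of $p_\ast$ extends to $(\mathbf{h},p)\mapsto(\mathbf{h},1,\dots,1,s(p))$. Conversely, a section $\widetilde{s}$ of $(\psi_m,p_\ast)$ restricts and projects to $\mathrm{pr}_2\circ\widetilde{s}\circ\iota$, which is a section of $p_\ast$.
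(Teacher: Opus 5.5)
Your proposal is correct and follows the paper's proof: you specialize Proposition~\ref{prop:sections_forgetful} to $H=\mathbb{Z}$ and $G_k=B_k(M)$ with the canonical permutation maps, whose kernels are $P_k(M)$, and you use Proposition~\ref{prop:pure_splits} to identify $FP_k(M)\cong\mathbb{Z}^k\times P_k(M)$. Your side remarks are accurate and make explicit points the paper leaves implicit: surjectivity of $p_\ast$ follows from the Fadell--Neuwirth fibration but is never actually used, and the forgotten strands must be labelled last to match $\psi_m$.
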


\begin{proof}
Apply Proposition~\ref{prop:sections_forgetful} with
\[
G_{n+m}=B_{n+m}(M),\qquad G_n=B_n(M),\qquad H=\mathbb{Z},
\]
and with \(\sigma_{n+m}\) and \(\sigma_n\) the canonical permutation homomorphisms. In this case
\[
P_{\sigma_{n+m}}=P_{n+m}(M),
\qquad
P_{\sigma_n}=P_n(M),
\]
and Proposition~\ref{prop:pure_splits} gives
\[
FP_{n+m}(M)\cong \mathbb{Z}^{n+m}\times P_{n+m}(M),
\qquad
FP_n(M)\cong \mathbb{Z}^n\times P_n(M).
\]
The claim follows immediately.
\end{proof}

\begin{obs}
Corollary~\ref{coro:surface_sections} shows that the splitting problem for framed surface braid groups reduces entirely to the corresponding problem for pure surface braid groups.

The latter has been completely solved by Gonçalves and Guaschi~\cite[Theorem~2]{GG2010}, who determine precisely when the Fadell--Neuwirth homomorphism
\[
p_\ast\colon P_{n+m}(M)\longrightarrow P_n(M)
\]
admits a section, for arbitrary compact surfaces $M$.

Therefore, the existence of sections for the framed forgetful homomorphism
\[
(\psi_m,p_\ast)\colon FP_{n+m}(M)\longrightarrow FP_n(M)
\]
is completely governed by this classification. In particular, no new obstructions arise in the framed setting.

This shows that the framed construction preserves the geometric complexity of the splitting problem without introducing additional algebraic obstructions.
\end{obs}

\subsection{Unified structural interpretation}

All the framed braid-type groups above arise from the canonical extension
\[
1\to \mathbb{Z}^n \to \mathbb{Z}\wr S_n \to S_n \to 1
\]
by base change along the respective homomorphisms $\sigma\colon G\to S_n$.

\begin{obs}
The pullback perspective of Theorem~\ref{teo:pullback} shows that framings are not ad hoc additions but rather base changes of the canonical permutational extension. 
This explains the uniform algebraic behavior observed across classical, surface, virtual, and singular braid theories.
\end{obs}

The examples of this section illustrate that framed braid-type groups are not isolated constructions, but rather arise uniformly as permutational wreath pullbacks. This provides a conceptual explanation for their shared structural properties.


\bibliographystyle{alpha}

\end{document}